\documentclass[letterpaper, 10 pt, conference]{ieeeconf}  

\IEEEoverridecommandlockouts                              
\overrideIEEEmargins

\usepackage{epsfig} 
\usepackage{amsmath,amssymb,amsfonts}
\usepackage{color}
\usepackage{balance}
\usepackage{booktabs,tabularx}
\usepackage{multirow}
\usepackage{mathtools}
\usepackage{pdfrender}
\usepackage{trfsigns}

\usepackage{centernot}
\usepackage{color}
\usepackage[acronym]{glossaries}
\usepackage[ruled]{algorithm2e}
\usepackage{graphicx,subcaption}
\usepackage{tabu}
\usepackage[official]{eurosym}

\graphicspath{{figures/}}

\newcommand{\R}{\mathbb{R}}

\newcommand{\N}{\mathbb{N}}

\newcommand{\mc}[1]{\mathcal{#1}}

\newcommand{\prob}{\mathbb{P}}

\newcommand{\bs}{\boldsymbol}

\newcommand{\col}{\mathrm{col}}

\newtheorem{theorem}{Theorem}

\newtheorem{definition}{Definition}
\newtheorem{proposition}{Proposition}
\newtheorem{lemma}{Lemma}

\newtheorem{remark}{Remark}
\newtheorem{assumption}{Assumption}
\newtheorem{standing}{Standing Assumption}

\makeglossaries
\newacronym{NEP}{NEP}{Nash equilibrium problem}
\newacronym{GNEP}{GNEP}{generalized Nash equilibrium problem}
\newacronym{iid}{i.i.d.}{independent and identically distributed}
\newacronym{w.r.t.}{w.r.t.}{with respect to}
\newacronym{a.s.}{a.s.}{almost surely}

\newglossaryentry{v-GNE}
{
	name={v-GNE},
	description={variational generalized Nash equilibrium},
	first={\glsentrydesc{v-GNE} (\glsentrytext{v-GNE})},
	plural={v-GNE},
	descriptionplural={variational generalized Nash equilibria},
	firstplural={\glsentrydescplural{v-GNE} (\glsentryplural{v-GNE})}
}

\newglossaryentry{GNE}
{
	name={GNE},
	description={generalized Nash equilibrium},
	first={\glsentrydesc{GNE} (\glsentrytext{GNE})},
	plural={GNE},
	descriptionplural={generalized Nash equilibria},
	firstplural={\glsentrydescplural{GNE} (\glsentryplural{GNE})}
}

\title{\LARGE \bf
On the robustness of equilibria in generalized aggregative games
}

\author{Filippo Fabiani, Kostas Margellos and Paul J. Goulart  
\thanks{The authors are with the Department of Engineering Science, University of Oxford, OX1 3PJ, United Kingdom {\tt \footnotesize (\{filippo.fabiani, kostas.margellos, paul.goulart\}@eng.ox.ac.uk)}. This work was partially supported through the Government’s modern industrial strategy by Innovate UK, part of UK Research and Innovation, under Project LEO (Ref. 104781).}%
}

\begin{document}

\maketitle
\thispagestyle{empty}
\pagestyle{empty}

\begin{abstract}

We address the problem of assessing the robustness of the equilibria in uncertain, multi-agent games. Specifically, we focus on generalized Nash equilibrium problems in aggregative form subject to linear coupling constraints affected by uncertainty with a possibly unknown probability distribution. Within a data-driven context, we apply the scenario approach paradigm to provide a-posteriori feasibility certificates for the entire set of generalized Nash equilibria of the game. Then, we show that assessing the violation probability of such set merely requires to enumerate the constraints that ``shape'' it. For the class of aggregative games, this results in solving a feasibility problem on each active facet of the feasibility region, for which we propose a semi-decentralized algorithm. We demonstrate our theoretical results by means of an academic example.

\end{abstract}

\section{Introduction}
Multi-agent decision making is an established paradigm to model and solve problems involving multiple heterogeneous (possibly selfish) agents with individual goals. Moreover, as part of the same population, such entities interact and potentially share common resources or compete for them, giving rise to a noncooperative setup.
In this context, equilibrium solution concepts based on game theory and, in particular, \glspl{GNEP} \cite{facchinei2007generalized}, provide a framework that encompasses many control engineering problems, e.g., communication and networks \cite{scutari2014real,facchinei2017feasible}, automated driving and traffic control \cite{smith1979existence,8672171}, smart grids and demand-side management \cite{ma2011decentralized,chen2014autonomous,9030152}. 

However, Nash equilibria are typically formalized in games with complete information, i.e., where the main ingredients (agents' cost functions and strategies, local and coupling constraints) are fully deterministic. Apparently, this might suggest a conceptual shift when dealing with real-world applications, since the latter are strongly affected by the presence of uncertainty, and therefore traditional equilibrium notions may no longer be appropriate. This motivates to seek for robust \gls{GNEP} reformulations, suitably accompanied by tailored equilibrium solution definitions.

By the pioneering work in \cite{harsanyi1962bargaining}, the literature on robust game theory divides into two main directions that depend on the available information (or working assumptions) around the uncertain parameter. Specifically, several results deal with uncertainty characterized by specific models of either their probability distribution \cite{couchman2005gaming,singh2016existence}, or the geometry of its support set \cite{aghassi2006robust,hayashi2005robust,perchet2020finding}. Conversely, there has been a recent development of data-driven (or distribution-free) robust approaches, see, e.g., \cite{9028952,paccagnan2019scenario,pantazis2020aposteriori}.

Within this data-driven context, the main results of the aforementioned papers characterize the robustness of equilibria to unseen realizations of the uncertain parameter by leveraging on the scenario approach paradigm \cite{campi2018introduction}. Originally conceived to provide a-priori feasibility guarantees associated with the optimal solution to an uncertain convex optimization problem \cite{calafiore2006scenario}, the scenario theory has been recently extended by means of an a-posteriori assessment of the feasibility risk to nonconvex decision-making problems \cite{campi2018general}. In a nutshell, the scenario theory establishes that the robustness of the solution to a given uncertain decision-making problem shall be assessed by solving an approximated, yet computationally tractable, problem that is built upon a finite number of observed realizations of the uncertainty.  

We aim at bridging the multi-agent generalized game theory with the data-driven scenario paradigm, in order to compute \gls{GNE} with quantifiable robustness properties in a distribution-free fashion. Specifically, we focus on the broad class of \glspl{GNEP} in aggregative setting (\S II), where the cost function of each agent depends on the average behaviour of the whole population and the strategies are coupled by means of (affine) coupling constraints affected by uncertainty with a possibly unknown probability distribution. Here, we contextualize and apply the probabilistic results in \cite{campi2018general} to provide a-posteriori feasibility certificates to the entire set of \glspl{v-GNE}, a popular subset of \glspl{GNE} \cite{cavazzuti2002nash}. Compared with the literature on robust data-driven game theory, our contributions can be summarized as follows.
\begin{itemize}
	\item Along the direction of \cite{pantazis2020aposteriori}, we focus on the entire set of equilibria, implicitly relaxing the assumption on the uniqueness of the equilibrium postulated in \cite{paccagnan2019scenario};
	
	\item We extend the results in \cite{9028952,pantazis2020aposteriori} providing a-posteriori robustness certificates for the set of \gls{GNE} rather than for the feasible set or Nash equilibria without coupling constraints. We also show that the resulting bounds are less conservative (\S III);
	
	\item The obtained probabilistic guarantees rely on the notion of support subsample, a key concept of the scenario approach theory. To compute these support subsamples we show that it is merely required to enumerate the constraints that ``shape'' the set of \gls{GNE}. An explicit representation of the unknown set of equilibria is therefore not needed (\S III);
	
	\item For the considered class of \glspl{GNEP}, we propose a structure-preserving, semi-decentralized algorithm to compute the number of minimal irreducible support subsamples \gls{w.r.t.} the set of \gls{GNE} (\S IV).
\end{itemize}

Finally, we validate the proposed theoretical results on an illustrative example (\S V).

\smallskip
\subsubsection*{Notation} 
$\N$ and $\R$ denote the set of natural and real numbers, respectively. For vectors $v_1,\dots,v_N\in\mathbb{R}^n$ and $\mc I=\{1,\dots,N \}$, we denote $\boldsymbol{v} \coloneqq (v_1 ^\top,\dots ,v_N^\top )^\top = \mathrm{col}((v_i)_{i\in\mc I})$ and $ \bs{v}_{-i} \coloneqq ( v_1^\top,\dots,v_{i-1}^\top,v_{i+1}^\top,\dots,v_{N}^\top )^\top =\col(( v_j )_{j\in\mc I\setminus \{i\}})$. With a slight abuse of notation, we also use $\bs{v} = (v_i,\bs{v}_{-i})$.  Given a matrix $A \in \R^{m \times n}$, $A^\top$ denotes its transpose, while for $A \in \R^{n \times n}$, $A \succ 0$ ($\succcurlyeq 0$) implies that $A$ is symmetric and positive (semi)-definite. For a given set $\mc{S} \subseteq \R^n$, $\textrm{bdry}(\mc{S})$ denotes its boundary.
If $\mc{S}$ is closed and convex, the normal cone of $\mc{S}$ evaluated at some $\bs{x}$ is the set-valued mapping $\mc{N}_{\mc{S}} : \R^n \to 2^{\R^n}$, defined as $\mc{N}_{\mc{S}}(\bs{x}) \coloneqq \{	d \in \R^n \mid d^\top (\bs{y} - \bs{x}) \leq 0, \; \forall \bs{y} \in \mc{S}	\}$ if $\bs{x} \in \mc{S}$, $\mc{N}_{\mc{S}}(\bs{x}) \coloneqq \emptyset$ otherwise. A mapping $F:\R^n \rightarrow \R^n$ is monotone if $(F(\bs{x}) - F(\bs{y}))^\top(\bs{x} - \bs{y}) \geq 0$ for all $\bs{x}, \bs{y} \in \R^n$. $\mc{C}^1$ is the class of continuously differentiable functions.


\section{Mathematical setup and problem statement}
We start by formalizing the data-driven, uncertain game considered. Then, we mathematically define the problem addressed, and finally recall some key results for the class of \gls{v-GNE} characterizing \glspl{GNEP} in aggregative form.

\subsection{Aggregative game formulation}
We consider a noncooperative, multi-agent game whose $N$ players are indexed by the set $\mc{I} \coloneqq \{1, \ldots,N\}$. Let $x_i \in \R^{n_i}$ be the decision vector of the $i$-th player, locally constrained to a set $\mc{X}_i \subseteq \R^{n_i}$. In this context, each player aims at minimizing a predefined cost function $J_i : \R^n \to \R$, $n \coloneqq \sum_{i \in \mc{I}} n_i$, while satisfying a set of coupling constraints among the agents affected by the realization of an uncertain vector $\delta$, encoded by the set $\mc{X}_{\delta} \subseteq \R^n$. Specifically, $\delta$ takes values in the set $\Delta \subseteq \R^\ell$, endowed with a $\sigma$-algebra $\mc{D}$ and distributed according to $\prob$, a possibly unknown probability measure over $\mc{D}$. This results in the following family of mutually coupled optimization problems:

$$
\forall i \in \mc{I} : \left\{
\begin{aligned}
&\underset{x_i \in \mc{X}_i}{\textrm{min}} & & J_i(x_i,  \bs{x}_{-i})\\
&\hspace{.1cm}\textrm{ s.t. } & & (x_i, \bs{x}_{-i}) \in \mc{X}_{\delta}, \, \delta \in \Delta.
\end{aligned}	
\right.
$$

For computational purposes, hereinafter we consider each cost function to be in aggregative form and quadratic, while $\mc{X}_{\delta}$ is a polyhedral set for every realization of $\delta$, i.e.,
$$
\begin{aligned}
	J_i &\coloneqq  \tfrac{1}{2} x^\top_i Q_i x_i + (\tfrac{1}{N}\textstyle\sum_{j \in \mc{I} \setminus \{i\}} C_{i,j} x_j + q_i)^\top  x_i, \; \forall i \in \mc{I},\\
	\mc{X}_{\delta} &\coloneqq \{\bs{x} \in \R^n \mid A(\delta) \, \bs{x} \leq b(\delta)\}, \; \forall \delta \in \Delta,
\end{aligned}
$$
where $Q_i \succ 0$, $C_{i,j} \in \R^{n_i \times n_j}$ for all $(i,j) \in \mc{I}^2$, $q_i \in \R^{n_i}$, while $A : \Delta \to \R^{m \times n}$ and $b : \Delta \to \R^{m}$. In view of the considered structure, it follows immediately that every $J_i(\cdot, \bs{x}_{-i})$ is a convex function of class $\mc{C}^1$, for any $\bs{x}_{-i} \in \R^{(n - n_i)}$, for all $i \in \mc{I}$. Then, given the linear structure of $\mc{X}_{\delta}$, we note that it can be equivalently defined by the set of inequalities $A_i(\delta) x_i + \textstyle\sum_{j \in \mc{I} \setminus \{i\}} A_j(\delta) x_j \leq b(\delta)$, with $A_i : \Delta \to \R^{m \times n_i}$, for all $i \in \mc{I}$ and for all $\delta \in \Delta$. For the remainder, we postulate the following assumption.

\smallskip
\begin{standing}
	For all $i \in \mc{I}$, $\mc{X}_i \subseteq \R^{n_i}$ is a polytopic set.
	\hfill$\square$
\end{standing}
\smallskip

To conclude, we note that the polytopic set encompassing all deterministic, local constraints $\mc{X} \coloneqq \prod_{i \in \mc{I}} \mc{X}_i$, can also be rewritten in compact form as $\mc{X} \coloneqq \{\bs{x} \in \R^n \mid H \bs{x} \leq h\}$, for some $H$ and $h$ obtained by concatenating the matrices and vectors that define the local constraint sets, $\mc{X}_i$.

\subsection{Scenario-based \gls{GNEP}}
The noncooperative game considered directly falls within the set of jointly convex \glspl{GNEP} \cite[Def.~2]{facchinei2007generalized}, and we consider a data driven approach to asses the robustness of a set of equilibria to such game. Specifically, let $\delta_K \coloneqq \{\delta^{(k)}\}_{k \in \mc{K}} = \{\delta^{(1)}, \ldots, \delta^{(K)}\} \in \Delta^K$ be a finite collection of $K \in \N \cup \{0\}$ \gls{iid} samples of $\delta$, $\mc{K} \coloneqq \{1,2,\ldots,K\}$, hereinafter referred to as $K$-multisample. The scenario-based \gls{GNEP} $\Gamma$ is defined as the tuple $\Gamma \coloneqq (\mc{I}, (\mc{X}_i)_{i \in \mc{I}}, (J_i)_{i \in \mc{I}}, \delta_K)$, encoded by the following family of optimization problems:
\begin{equation}\label{eq:single_prob_aggregative}
\forall i \! \in \! \mc{I} \! : \! \left\{
\begin{aligned}
&\underset{x_i \in \mc{X}_i}{\textrm{min}} & & \tfrac{1}{2} x^\top_i Q_i x_i + (\tfrac{1}{N} \textstyle\sum_{j \in \mc{I} \setminus \{i\}} C_{i,j} x_j + q_i)^\top x_i \\
&\hspace{.1cm}\textrm{ s.t. } & & A(\delta^{(k)}) \, \bs{x} \leq b(\delta^{(k)}), \, \text{ for all } k \in \mc{K}.
\end{aligned}	
\right.
\end{equation}

For any $\delta^{(k)} \in \delta_K$, define the set $\mc{X}_{\delta^{(k)}} \coloneqq \{\bs{x} \in \R^n \mid A(\delta^{(k)}) \, \bs{x} \leq b(\delta^{(k)}) \}$, while $\mc{X}^K_{i}(\bs{x}_{-i}) \coloneqq \{x_i \in \mc{X}_i \mid  (x_i, \bs{x}_{-i}) \in \cap_{k \in \mc{K}} \mc{X}_{\delta^{(k)}} \} $ and $\mc{X}_K \coloneqq \cap_{k \in \mc{K}} \mc{X}_{\delta^{(k)}}  \cap \mc{X}$. We consider the following notion of equilibrium for $\Gamma$.

\smallskip
\begin{definition}\label{def:GNE}
	Let $\delta_K \in \Delta^K$ be any $K$-multisample. The collective vector of strategies $\bs{x}^\ast \in \mc{X}_{K}$ is a \gls{GNE} of $\Gamma$ in \eqref{eq:single_prob_aggregative} if, for all $i \in \mc{I}$,
	$$
	J_i (x^\ast_i,  \bs{x}^\ast_{-i}) \leq \underset{y_i \in \mc{X}^K_{i}(\bs{x}^\ast_{-i})}{\mathrm{min}} \, J_i (y_i,  \bs{x}^\ast_{-i}).
	$$ 
	\hfill$\square$
\end{definition}
\smallskip

Clearly, given the dependence on the set of $K$ realizations $\delta_K \in \delta_K$, any equilibrium of $\Gamma$ is a random variable itself.

Now, let $\Omega_\delta$ be the set of equilibria induced by $\delta \in \Delta$. In the spirit of \cite[Def.~4]{pantazis2020aposteriori}, we investigate the violation probability of the set of equilibria of a scenario-based \gls{GNEP}, according to the definition given next.

\smallskip
\begin{definition}\label{def:violation_set}
	The violation probability of a set of \gls{GNE}, $\Omega$, is defined as
	\begin{equation}\label{eq:violation_set}
	V(\Omega) \coloneqq \prob\{\delta \in \Delta \mid \Omega \not\subseteq \Omega_\delta \}.
	\end{equation}
	\hfill$\square$
\end{definition} 
\smallskip

Specifically, the random variable $V(\Omega)$ encodes the robustness of the set $\Omega$ to the uncertain parameter $\delta$, i.e., given any reliability parameter $\epsilon \in (0,1)$, we say that $\Omega$ is $\epsilon$-robust if $V(\Omega) \leq \epsilon$. Here, the condition $\Omega \not\subseteq \Omega_\delta$ means that, once $\delta$ is drawn, at least one element in $\Omega$ is not an equilibrium any more. Thus, along the lines of \cite{campi2018general}, by relying on the observations of the uncertain parameter, i.e., the $K$-multisample $\delta_K$, our goal is to evaluate the violation probability of the set of equilibria $\Omega_K$. For the remainder, we restrict the set $\Omega_K$ to correspond to the set of \gls{v-GNE} of the scenario-based \gls{GNEP} \eqref{eq:single_prob_aggregative}, as described in the next section.

\subsection{Characterization of \gls{v-GNE}}
A popular subset of \gls{GNE} of a given game $\Gamma$ is the one of \gls{v-GNE}, characterized as the set of equilibria providing ``larger social stability'' \cite[\S 5]{cavazzuti2002nash}. Specifically, the set of \gls{v-GNE} corresponds to the set of collective strategies that solve the variational inequality associated with the scenario-based \gls{GNEP} in \eqref{eq:single_prob_aggregative}. Thus, given the $K$-multisample $\delta_K$, the set of \gls{v-GNE} coincides with the solution set to VI$(\mc{X}_K, F)$, where $\mc{X}_K$ is the feasible set and $F:\R^n \to \R^n$ is the so-called game mapping, constructed by stacking the partial derivatives of $J_i$, i.e., $F(\bs{x}) \coloneqq \col((\nabla_{x_i} J_i(x_i, \bs{x}_{-i}))_{i \in \mc{I}})$, given by
$$\Omega_{K} \coloneqq \{\bs{x} \in \mc{X}_K \mid (\bs{y} - \bs{x})^\top F(\bs{x}) \geq 0, \; \forall \bs{y} \in \mc{X}_K \}.$$

In our aggregative setting with quadratic cost functions, the game mapping turns out to be affine in the collective vector of strategies $\bs{x}$, i.e., $F(\bs{x}) = M \bs{x} + q$, where $M \in \R^{n \times n}$ and $q \in \R^n$ are defined as:
$$
M \! \coloneqq \! \left[\begin{array}{cccc}
Q_1 & \tfrac{1}{N}C_{1,2} & \cdots & \tfrac{1}{N}C_{1,N}\\
\tfrac{1}{N}C_{2,1} & Q_2 & \cdots & \tfrac{1}{N}C_{2,N}\\
\vdots & \vdots & \ddots &\vdots\\
\tfrac{1}{N}C_{N,1} & \tfrac{1}{N}C_{N,2} & \cdots & Q_N
\end{array}\right]\!, \, q  \coloneqq \! \left[\begin{array}{c}
q_1\\
q_2\\
\vdots\\
q_N
\end{array}
\right].
$$

\smallskip
\begin{standing}\label{ass:monotone_game}
	The mapping $F:\R^n \to \R^n$ is monotone.
	\hfill$\square$
\end{standing}
\smallskip

We remark that an affine mapping is monotone if and only if  $(M + M^\top) \succcurlyeq 0$. This can be guaranteed by, e.g., assuming equivalent bilateral interactions among agents, $C_{i,j} = C_{j,i}$, for all $(i,j) \in \mc{I}^2$ (in addition to $Q_i \succ 0$, for all $i \in \mc{I}$).

Now, we recall some results available in the literature on affine variational inequalities, which will be key in the remainder of the paper. 
Specifically, let us consider first the game $\Gamma$ in the absence of the coupling constraints, and let us focus on the (deterministic) \gls{NEP} associated to \eqref{eq:single_prob_aggregative} with $\mc{X}_K = \mc{X}$, which reads as
\begin{equation}\label{eq:NEP_single_prob_aggregative}
\forall i \! \in \! \mc{I} \!:\!
\underset{x_i \in \mc{X}_i}{\textrm{min}} \; \tfrac{1}{2} x^\top_i Q_i x_i + (\tfrac{1}{N} \textstyle\sum_{j \in \mc{I} \setminus \{i\}} C_{i,j} x_j + q_i)^\top x_i. 
\end{equation}

The set of variational Nash equilibria to such \gls{NEP}, namely $\Omega_{0} \coloneqq \Omega_{\delta^{(0)}}$,  coincides with the set of solutions to a linearly constrained, affine variational inequality problem, and hence is characterized by the following lemma that combines \cite[Lemma~2.4.14,  Th.~2.4.15]{facchinei2007finite}, \cite[Lemma~1, Th.~2]{gowda1994boundedness}.

\smallskip
\begin{lemma}\label{lemma:agg_GNEP}
	Let $M \succcurlyeq 0$. Then, the following statements hold true:
	
	\begin{enumerate}
		\item[(i)] $\Omega_{0}$ is a bounded polyhedral set;
		\item[(ii)] There exist a vector $c \in \R^n$ and a constant $d \geq 0$ such that, for all $\bs{x} \in \Omega_{0}$, $(M + M^\top) \bs{x} = c$ and $\bs{x}^\top M \bs{x} = d$;
		
		\item[(iii)] Let $\omega(\bs{x}) \coloneqq \textrm{min}_{\bs{y} \in \mc{X}} \; \bs{y}^\top (M \bs{x} + q)$, and let $\mc{P} \coloneqq \{ \bs{x} \in \mc{X} \mid \omega(\bs{x}) - (d + q^\top \bs{x}) \geq 0 \}$. Then
		$$
		\Omega_{0} \coloneqq \{ \bs{x} \in \mc{P} \mid (M + M^\top) \bs{x} = c \}.
		$$
	\end{enumerate}
	\hfill$\square$
\end{lemma}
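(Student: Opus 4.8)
The plan is to prove the three claims in the order (ii), (iii), (i), because the explicit description obtained in (iii) delivers the polyhedrality asserted in (i) essentially for free. Throughout I use that, under the Standing Assumptions, $\mc{X}$ is a nonempty compact polytope and $F(\bs{x})=M\bs{x}+q$ is affine, hence continuous; by a standard existence result for variational inequalities over compact convex sets this guarantees $\Omega_{0}\neq\emptyset$, so the constants $c,d$ below are well defined, and $\Omega_{0}\subseteq\mc{X}$ is automatically bounded.

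For (ii), I would exploit monotonicity directly. Let $\bs{x}_1,\bs{x}_2\in\Omega_{0}$. Writing the defining inequality of each against the other feasible point (i.e.\ testing the VI at $\bs{x}_1$ with $\bs{y}=\bs{x}_2$ and vice versa) and adding the two yields $(\bs{x}_1-\bs{x}_2)^\top M(\bs{x}_1-\bs{x}_2)\le 0$; since $M\succcurlyeq 0$ gives the reverse inequality, $(\bs{x}_1-\bs{x}_2)^\top M(\bs{x}_1-\bs{x}_2)=0$. Because $z^\top M z=\tfrac12 z^\top(M+M^\top)z$ and $M+M^\top\succcurlyeq 0$, a zero of this quadratic form is a zero of $M+M^\top$, so $(M+M^\top)(\bs{x}_1-\bs{x}_2)=0$; thus $(M+M^\top)\bs{x}$ is constant on $\Omega_{0}$, which I call $c$. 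Then $\bs{x}^\top M\bs{x}=\tfrac12\bs{x}^\top c$, and using $c=(M+M^\top)\bs{x}_j$ together with the symmetry of $M+M^\top$ one checks $\bs{x}_1^\top c=\bs{x}_2^\top c$, whence $\bs{x}^\top M\bs{x}\equiv d$ with $d\ge 0$ by positive semidefiniteness.

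For (iii), I would first restate the VI via the gap: since $\bs{x}\in\mc{X}$, the inequality $\omega(\bs{x})\le\bs{x}^\top(M\bs{x}+q)$ always holds, so $\bs{x}\in\Omega_{0}$ if and only if $\bs{x}\in\mc{X}$ and $\omega(\bs{x})\ge\bs{x}^\top M\bs{x}+q^\top\bs{x}$. The inclusion $\Omega_{0}\subseteq\{\bs{x}\in\mc{P}\mid(M+M^\top)\bs{x}=c\}$ is then immediate from (ii). For the reverse inclusion, which is the crux, I would show that the linear slice alone already pins the quadratic term: if $\bs{x}\in\mc{X}$ satisfies $(M+M^\top)\bs{x}=c$, then picking any $\bs{x}_0\in\Omega_{0}$ and using symmetry gives $\bs{x}^\top M\bs{x}=\tfrac12\bs{x}^\top c=\tfrac12\bs{x}_0^\top(M+M^\top)\bs{x}=\tfrac12\bs{x}_0^\top c=d$. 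Hence membership in $\mc{P}$, i.e.\ $\omega(\bs{x})\ge d+q^\top\bs{x}$, becomes exactly the gap condition $\omega(\bs{x})\ge\bs{x}^\top M\bs{x}+q^\top\bs{x}$, so $\bs{x}\in\Omega_{0}$.

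Finally, (i) follows from (iii): the minimum of a linear objective over the polytope $\mc{X}$ is attained at a vertex, so $\omega$ is the pointwise minimum of the finitely many affine maps $\bs{x}\mapsto v^\top(M\bs{x}+q)$ indexed by the vertices $v$ of $\mc{X}$; it is therefore concave and piecewise affine, and the superlevel set defining $\mc{P}$ is a finite intersection of half-spaces. Intersecting with the affine set $\{(M+M^\top)\bs{x}=c\}$ keeps $\Omega_{0}$ polyhedral, and boundedness is inherited from $\mc{X}$. The main obstacle I anticipate is precisely the reverse inclusion in (iii): one must be careful that the constant $d$ hard-coded into the definition of $\mc{P}$ is consistent with the value of $\bs{x}^\top M\bs{x}$ across the whole slice, which is exactly what the identity above guarantees; the remaining steps are routine and match the cited statements in \cite{facchinei2007finite,gowda1994boundedness}.
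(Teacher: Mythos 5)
Your proof is correct. Note that the paper itself contains no proof of this lemma: it imports the three statements by combining results cited from the literature (Facchinei--Pang, Lemma~2.4.14 and Th.~2.4.15, and Gowda--Pang), so your proposal is a self-contained reconstruction, and it follows the same standard route used in those references for monotone affine variational inequalities. Each step checks out: the two-point monotonicity argument in (ii) correctly yields $(\bs{x}_1-\bs{x}_2)^\top M(\bs{x}_1-\bs{x}_2)=0$, and the implication $z^\top(M+M^\top)z=0 \Rightarrow (M+M^\top)z=0$ is valid because $M+M^\top$ is symmetric positive semidefinite; the symmetry argument giving $\bs{x}_1^\top c=\bs{x}_2^\top c$, hence constancy of $d$, is right. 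The crux you flag in (iii) --- that any $\bs{x}\in\mc{X}$ on the slice $(M+M^\top)\bs{x}=c$ automatically satisfies $\bs{x}^\top M\bs{x}=d$, so that membership in $\mc{P}$ coincides with the VI gap condition $\omega(\bs{x})\geq \bs{x}^\top(M\bs{x}+q)$ --- is exactly what makes the characterization work, and your identity chain $\bs{x}^\top M\bs{x}=\tfrac12\bs{x}^\top c=\tfrac12\bs{x}_0^\top(M+M^\top)\bs{x}=\tfrac12\bs{x}_0^\top c=d$ establishes it. The ordering (ii) $\to$ (iii) $\to$ (i) avoids circularity, and deducing polyhedrality in (i) by vertex enumeration of the polytope $\mc{X}$ (so that $\omega$ is a pointwise minimum of finitely many affine functions and $\mc{P}$ is a finite intersection of half-spaces) is sound. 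What your version buys over the paper's citation is self-containedness and an explicit verification that the constant $d$ hard-coded into $\mc{P}$ is consistent across the whole affine slice; what the citation buys is brevity and greater generality (the quoted theorems also cover unbounded polyhedral $\mc{X}$, where boundedness of the solution set requires the copositivity-type conditions of Gowda--Pang rather than compactness of $\mc{X}$, on which your argument for (i) and for nonemptiness relies).
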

\smallskip

By noticing that $\mc{P}$ is a polyhedral set, roughly speaking the set of Nash equilibria $\Omega_{0}$ contains the feasible strategies that span $(M + M^\top)$, and it is characterized by the two invariants $c$ and $d$. We note that, given any $\delta_K \in \Delta^K$, Lemma~\ref{lemma:agg_GNEP}(iii) allows to introduce coupling constraints, and characterize the set of \gls{v-GNE}, $\Omega_{K}$. Specifically, we have
\begin{equation}\label{eq:generic_set_equilibria}
	\Omega_{K} \coloneqq \{ \bs{x} \in \R^n \mid (M + M^\top) \bs{x} = c \} \cap \mc{P}_K,
\end{equation}
where $\mc{P}_K \coloneqq \{ \bs{x} \in \mc{X}_K \mid \omega(\bs{x}) - (d + q^\top \bs{x}) \geq 0 \}$, and the function $\omega(\bs{\cdot})$ is restricted to the feasible set $\mc{X}_K \subseteq \mc{X}$, which accounts for the coupling constraints. Finally, we recall that $\bs{x}^\ast \in \Omega_{K} \iff -F(\bs{x}^\ast) = -(M \bs{x}^\ast + q) \in \mc{N}_{\mc{X}_K}(\bs{x}^\ast)$.

\smallskip
\begin{remark}
	In view of Standing Assumption~\ref{ass:monotone_game}, we have $M \succcurlyeq 0$. When $M \succ 0$, the mapping $F$ is strictly monotone and hence the scenario-based \gls{GNEP} admits a unique equilibrium that, in general, can not be characterized as in Lemma~\ref{lemma:agg_GNEP}. The results showed next focus on the general case, i.e., $F$ monotone mapping with $M \succcurlyeq 0$, while the other case follows straightforwardly. In fact, if $M \succ 0$, Lemma~\ref{lemma:ifonlyif} and Theorem~\ref{th:VI} below still hold, by requiring only Assumption~\ref{ass:nonemptiness} to be imposed, thus relaxing Assumption~\ref{ass:nondeg}.
	\hfill$\square$
\end{remark}

\section{Probabilistic feasibility for a set of \gls{GNE}}
In this section, we first recall some key concepts and results of the scenario approach theory, and then discuss how to extend them to a set-oriented framework. Successively, we provide bounds on the violation probability related to the set of equilibria of the scenario-based \gls{GNEP} $\Gamma$ in \eqref{eq:single_prob_aggregative}.

\subsection{A weak connection among sets of \gls{GNE}}
Recent developments in the scenario approach literature have led to a-posteriori probabilistic feasibility guarantees for abstract decision problems \cite{campi2018general} (see Theorem 1 therein), which is based on the two following conditions:
\begin{enumerate}
	\item[(i)] For all $K \in \N \cup \{0\}$ and all $\delta_K \in \Delta^K$, the decision of an abstract problem is unique;
	\item[(ii)] The decision taken while observing $K$ realizations shall be \emph{consistent} for all the collected situations $k \in \mc{K}$ \cite[Assumption~1]{campi2018general}.
\end{enumerate}

Specifically, \cite[Th.~1]{campi2018general} studies the distribution of $V(\theta^\ast_K)$, where $\theta^\ast_K$ is the unique solution to the abstract decision problem computed after observing $K$ realizations of the uncertain parameter, and finds a suitable (probabilistic) bound $1-\beta$ guaranteeing that $V(\theta^\star_K) \leq \epsilon$ holds, for some $\beta \in (0,1)$.

Since the randomized \gls{GNEP} in \eqref{eq:single_prob_aggregative} is a decision problem, a key step to apply the probabilistic feasibility bound in \cite[Th.~1]{campi2018general} to the entire set of \gls{GNE} is to extend the conditions above to embrace the scenario-based generalized aggregative game $\Gamma$. To this end, in view of Definition~\ref{def:violation_set}, we mimic the steps made in \cite{campi2018general} by focusing on set-oriented decisions.

In the scenario-based \gls{GNEP} considered, our decision is a set and, specifically, we let correspond to the set of equilibria, $\Omega_K$. Then, in view of item \textrm{(i)}, guaranteeing the uniqueness of the set of equilibria for $\Gamma$ in \eqref{eq:single_prob_aggregative} is implicit since, for any $K$-multisample $\delta_K \in \Delta^K$, there is naturally a single set of equilibria $\Omega_K$, which is a nonempty, compact and convex set. This follows immediately from \cite[Th.~2.3.5]{facchinei2007finite}, as $\mc{X}_K$  is a bounded polyhedral set and $F$ is a continuous, monotone mapping. Therefore, let us consider a single-valued mapping $\Theta_K : \Delta^K \to 2^\mc{X}$ that, given a specific set of realizations $\delta_K$, returns the set of equilibria to the scenario-based \gls{GNEP} in \eqref{eq:single_prob_aggregative}, i.e., $\Omega_{K} \coloneqq \Theta_K(\delta^{(1)}, \ldots, \delta^{(K)}) = \Theta_K(\delta_K)$. When $K = 0$, $\Theta_0$ has no argument, and it is to be understood that it returns the set of equilibria of the deterministic \gls{NEP} in \eqref{eq:NEP_single_prob_aggregative}. In view of item \textrm{(ii)} above, we envision the following set-oriented counterpart of \cite[Ass.~1]{campi2018general}.

\smallskip
	``For all $K \in \N$ and for all $\delta_K \in \Delta^K$, $\Theta_K(\delta_K) \subseteq \Omega_{\delta^{(k)}}$, for all $k \in \mc{K} \cup \{0\}$.''
\smallskip

In the proposed analogy, we let the admissible decision for the situation represented by $\delta$ to coincide with the set of equilibria $\Omega_\delta$, which is clearly a subset of the feasible set $\mathcal{X}_\delta$ shaped by the uncertain parameter. 
Next, we show that the above set-oriented counterpart of \cite[Ass.~1]{campi2018general} holds true for the scenario-based \gls{GNEP} in \eqref{eq:single_prob_aggregative}. Given the specific structure of the problem addressed, in view of Lemma~\ref{lemma:agg_GNEP}, we postulate the following assumptions on the set of equilibria.

\begin{figure}[t]
	\centering
	\includegraphics[width=\columnwidth]{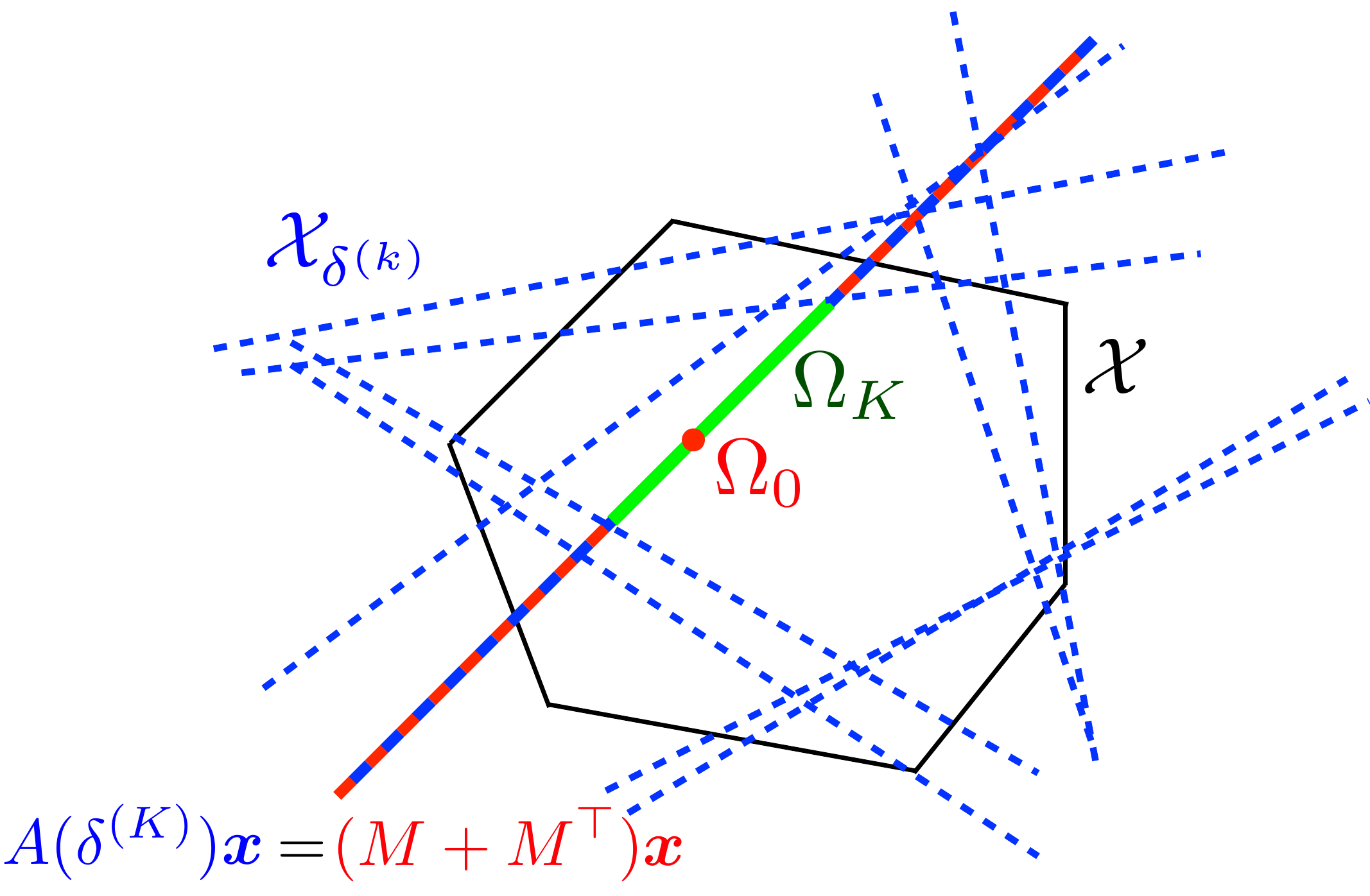}
	\caption{Schematic two-dimensional representation of the type of degenerate cases that, due to Assumption 2, can only happen with probability zero. In this case, $\Omega_{0}, \ldots, \Omega_{K-1}$ are singletons, however, if the $K$-th sample overlaps with the affine set in \eqref{eq:generic_set_equilibria} (assuming  $b(\delta^{(K)}) = c = \bs{0}$), it might generate additional equilibria belonging to $\textrm{bdry}(\mc{X}_{K})$, as formalized in the proof of Lemma~\ref{lemma:ifonlyif}. Thus, $\Omega_{K}$ is no longer a singleton and may lie entirely on the $\textrm{bdry}(\mc{X}_K)$.}
	\label{fig:deg_scenario_GNEP}
\end{figure}

\smallskip
\begin{assumption}\label{ass:nonemptiness}
	For all $K \in \N \cup \{0\}$, $\Omega_{K} \cap \mc{X}_{\delta}$ is nonempty, for any $\delta \in \Delta$.
	\hfill$\square$
\end{assumption}
\smallskip
\begin{assumption}\label{ass:nondeg}
	For all $\bs{x} \in \R^n$, $\prob\{\delta \in \Delta \mid A(\delta) \bs{x} - b(\delta) =  (M + M^\top) \bs{x} - c\} = 0$.
	\hfill$\square$
\end{assumption}
\smallskip

Nonemptiness of $\Omega_{K}$ is reasonable as we aim at quantifying robustness to unseen scenarios, while Assumption~\ref{ass:nondeg}  is a non-degeneracy condition often imposed in the scenario approach literature \cite[Ass.~6]{garatti2019risk}. It rules out, indeed, the possibility that a new affine coupling constraint corresponding to $\delta$ overlaps with the equilibria subspace $(M+M^\top)\bs{x} - c$, allowing such situations to occur with probability zero (see Fig. 1 for a graphical representation). This requirement is satisfied for all probability distributions $\prob$ that admit a density function.
Pictorially, generating samples gives rise to shared constraints that ``shape'' the set of equilibria, as represented in Fig.~\ref{fig:scenario_GNEP}. With this in mind, we are now in the position to prove the main result that links the set of \gls{GNE} of \eqref{eq:single_prob_aggregative} across the samples scenarios, thus establishing (probabilistically) the set-oriented counterpart of \cite[Ass.~1]{campi2018general}. 

\begin{figure}[t]
	\centering
	\includegraphics[width=\columnwidth]{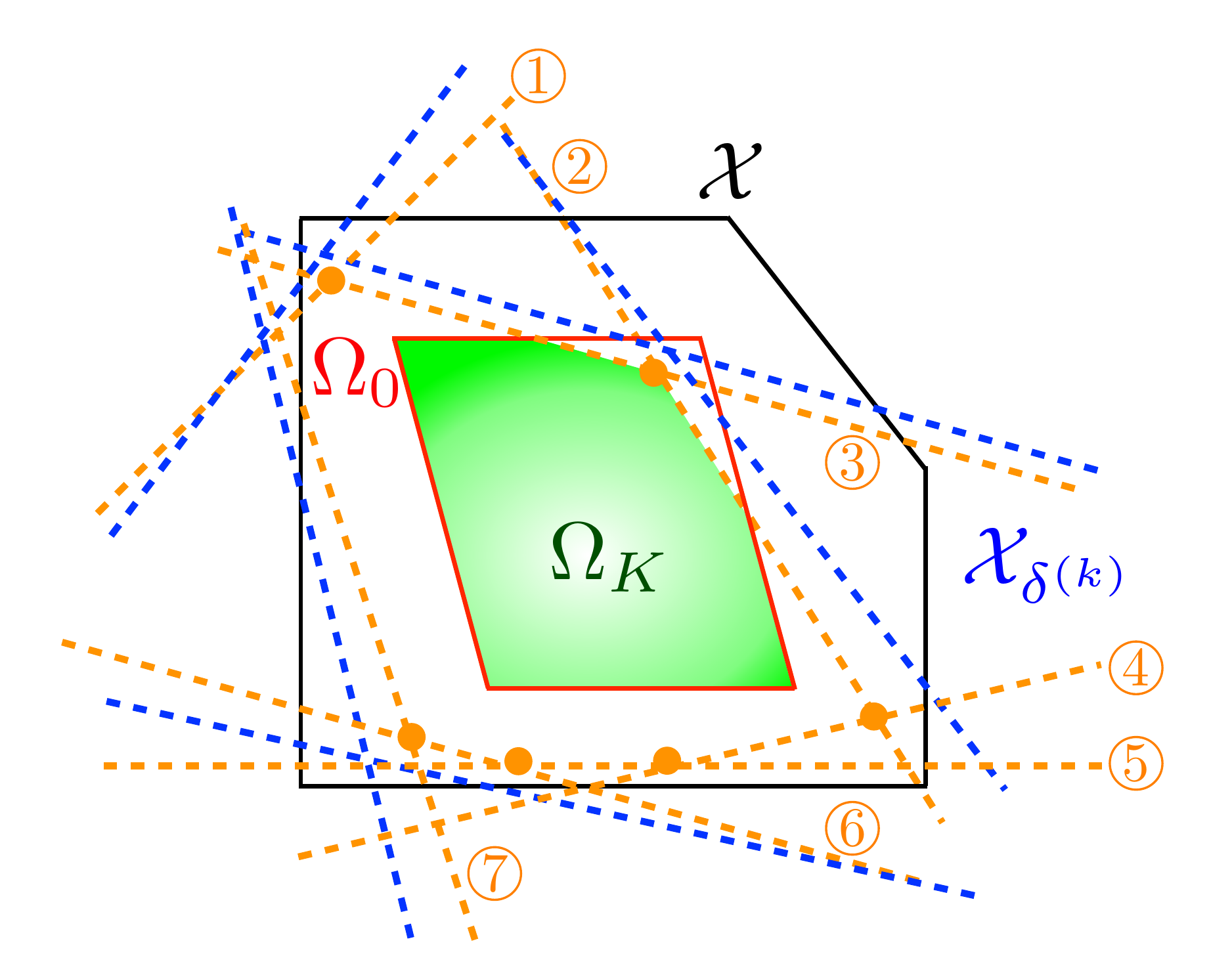}
	\caption{The set of \gls{GNE}, $\Omega_{K}$, can be ``shaped'' by the set of linear constraints, $\mc{X}_{\delta^{(k)}}$, $k \in \mc{K}$. 
		Specifically, by referring to Definition~\ref{def:support_sub}, the labelled dashed orange lines define the support subsample for $\delta_K$ \gls{w.r.t.} to $\mc{X}_K$, which in this case correspond to the the active samples that shape the feasibility region. Their intersections in $\mc{X}$ are denoted by orange dots.}
	\label{fig:scenario_GNEP}
\end{figure}

\smallskip
\begin{lemma}\label{lemma:ifonlyif}
		Let Assumption~\ref{ass:nonemptiness} and \ref{ass:nondeg} hold true. Then, for all $K \in \N$ and for all $\delta_K \in \Delta^K$, $\Theta_K(\delta_K) \subseteq \Omega_{\delta^{(k)}}$ \gls{a.s.}, for all $k \in \mc{K} \cup \{0\}$.
	\hfill$\square$
\end{lemma}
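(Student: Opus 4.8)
The plan is to reduce the full statement to the single almost-sure inclusion $\Omega_K \subseteq \Omega_0$ (the case $k=0$) and then to obtain this by excluding, via Assumption~\ref{ass:nondeg}, the degenerate configuration of Fig.~\ref{fig:deg_scenario_GNEP}. Throughout I would use two facts: the recalled variational characterization $\bs{x}^\ast \in \Omega_{\mc{C}} \iff \bs{x}^\ast \in \mc{C}$ and $-F(\bs{x}^\ast) \in \mc{N}_{\mc{C}}(\bs{x}^\ast)$; and the monotonicity of normal cones under inclusion, namely if $\mc{C}' \subseteq \mc{C}$ and $\bs{x}^\ast \in \mc{C}'$ then $\mc{N}_{\mc{C}}(\bs{x}^\ast) \subseteq \mc{N}_{\mc{C}'}(\bs{x}^\ast)$.

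First I would record the easy inclusion in the reverse direction. Since $\mc{X}_K \subseteq \mc{X}_{\delta^{(k)}} \cap \mc{X} \subseteq \mc{X}$, cone monotonicity applied at any $\bs{x}^\ast \in \mc{X}_K$ yields $\mc{N}_{\mc{X}}(\bs{x}^\ast) \subseteq \mc{N}_{\mc{X}_{\delta^{(k)}} \cap \mc{X}}(\bs{x}^\ast) \subseteq \mc{N}_{\mc{X}_K}(\bs{x}^\ast)$, whence $\Omega_0 \cap (\mc{X}_{\delta^{(k)}} \cap \mc{X}) \subseteq \Omega_{\delta^{(k)}}$. This delivers the reduction: once $\Omega_K \subseteq \Omega_0$ is established almost surely, then because $\Omega_K \subseteq \mc{X}_K \subseteq \mc{X}_{\delta^{(k)}} \cap \mc{X}$ we obtain $\Omega_K \subseteq \Omega_0 \cap (\mc{X}_{\delta^{(k)}} \cap \mc{X}) \subseteq \Omega_{\delta^{(k)}}$ for every $k \in \mc{K}$, the case $k=0$ being the inclusion itself. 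Assumption~\ref{ass:nonemptiness} enters here only to keep all the sets in play nonempty, so that the recalled existence/compactness theory applies and the inclusions are non-vacuous.

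The core is therefore $\Omega_K \subseteq \Omega_0$ a.s. Fixing $\bs{x}^\ast \in \Omega_K$, I have $-F(\bs{x}^\ast) \in \mc{N}_{\mc{X}_K}(\bs{x}^\ast)$ and must upgrade this to $-F(\bs{x}^\ast) \in \mc{N}_{\mc{X}}(\bs{x}^\ast)$. Writing the polyhedral cone as $\mc{N}_{\mc{X}_K}(\bs{x}^\ast) = \mc{N}_{\mc{X}}(\bs{x}^\ast) + \mathrm{cone}\{\text{rows of } A(\delta^{(j)}) \text{ active at } \bs{x}^\ast,\ j \in \mc{K}\}$, the inclusion $\bs{x}^\ast \in \Omega_0$ fails exactly when balancing $-F(\bs{x}^\ast)$ forces a strictly positive multiplier on some active coupling row --- that is, $\bs{x}^\ast$ is a new equilibrium created on $\textrm{bdry}(\mc{X}_K)$ by the samples. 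Equivalently, through Lemma~\ref{lemma:agg_GNEP}(iii), this is the case in which $\omega_0(\bs{x}^\ast) = \min_{\bs{y} \in \mc{X}} \bs{y}^\top F(\bs{x}^\ast)$ falls strictly below its restricted counterpart $\omega_K(\bs{x}^\ast) = \min_{\bs{y} \in \mc{X}_K} \bs{y}^\top F(\bs{x}^\ast)$ because the minimizer over $\mc{X}$ leaves $\mc{X}_K$. I would argue such a point can only arise when an active coupling hyperplane overlaps the equilibrium affine set $\{\bs{x} \mid (M+M^\top)\bs{x} = c\}$, precisely the coincidence $A(\delta)\bs{x} - b(\delta) = (M+M^\top)\bs{x} - c$ that Assumption~\ref{ass:nondeg} assigns probability zero.

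The hard part, and the technical heart of the proof, is that $\bs{x}^\ast$ is itself a function of the random multisample, so Assumption~\ref{ass:nondeg} cannot be invoked at a single pre-fixed point. I would close this gap using that $\mc{X}_K$ has finitely many faces: enumerate the finitely many candidate active-index sets (the constraints that ``shape'' $\Omega_K$), observe that for each such set and each $j \in \mc{K}$ the event that the corresponding sampled row aligns with $\{(M+M^\top)\bs{x} = c\}$ is $\prob$-null by Assumption~\ref{ass:nondeg}, and take a union bound over this finite collection. Off the resulting null set, every $\bs{x}^\ast \in \Omega_K$ admits a representation of $-F(\bs{x}^\ast)$ using rows of $H$ alone, i.e. $-F(\bs{x}^\ast) \in \mc{N}_{\mc{X}}(\bs{x}^\ast)$, so $\bs{x}^\ast \in \Omega_0$ and hence $\Omega_K \subseteq \Omega_0$ almost surely. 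This finite-enumeration-plus-union-bound device, which removes the dependence of the equilibrium location on the samples, is where I expect the main difficulty to lie.
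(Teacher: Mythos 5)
Your reduction of the statement to the single inclusion $\Omega_K \subseteq \Omega_0$, and the recovery of the per-sample inclusions $\Theta_K(\delta_K) \subseteq \Omega_{\delta^{(k)}}$ from it via normal-cone monotonicity, is correct, and your worry about invoking Assumption~\ref{ass:nondeg} at a sample-dependent point is legitimate (the paper itself glosses over this). The genuine gap is the step everything hinges on: the claim that a point of $\Omega_K \setminus \Omega_0$ ``can only arise when an active coupling hyperplane overlaps the equilibrium affine set $\{\bs{x} \mid (M+M^\top)\bs{x}=c\}$'' is announced (``I would argue'') but never proved, and it is in fact false with the ingredients you actually allow yourself. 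You demote Assumption~\ref{ass:nonemptiness} to ``keeping the sets in play nonempty,'' whereas its real role is to force some equilibrium of the less-constrained game to survive each added constraint; without that survival property, new equilibria can be created by hyperplanes that never overlap the affine set. Concretely, take the paper's own two-player example, where $\Omega_0 = \{\bs{x}\in\mc{X} \mid x_2 - x_1 = 1\}$ and $F(\bs{x}) = (x_1 - x_2 + 1)\,\col(1,-1)$, and a sample constraint $x_2 - x_1 \leq 0$: this hyperplane is parallel to and disjoint from the affine set, so the coincidence excluded by Assumption~\ref{ass:nondeg} never occurs and all equilibrium sets are nonempty, yet the entire segment $\{\bs{x}\in\mc{X} \mid x_2 = x_1\}$ consists of equilibria of the constrained game ($-F$ points along the constraint normal $\col(-1,1)$ there) while being disjoint from $\Omega_0$. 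An argument using only Assumption~\ref{ass:nondeg} plus nonemptiness therefore cannot prove the lemma; what rules out this configuration is Assumption~\ref{ass:nonemptiness} itself, which fails here because $\Omega_0 \cap \mc{X}_\delta = \emptyset$, and your proof never brings it to bear.

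There is a second, related omission: you describe points of $\Omega_K$ through the affine set with the invariant $c$ of $\Omega_0$. Lemma~\ref{lemma:agg_GNEP}(ii) only guarantees that $\Omega_K$ has \emph{some} invariant $c_K$; the identity $c_K = c$ already requires knowing that $\Omega_K$ and $\Omega_0$ share a point, i.e., exactly the survival property above. This is why the paper's proof is inductive, adding one sample at a time: at each step Assumption~\ref{ass:nonemptiness} gives $\Omega_{\bar k} \cap \mc{X}_{\delta^{(\bar k+1)}} \neq \emptyset$, which pins the invariant, yields the decomposition \eqref{eq:sets} of $\Omega_{\bar k+1}$ into surviving old equilibria plus possible new boundary equilibria, and leaves only the overlap configuration of Fig.~\ref{fig:deg_scenario_GNEP} for Assumption~\ref{ass:nondeg} to eliminate almost surely. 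To repair your proposal you would have to reinstate this mechanism --- either the induction, or an independent almost-sure proof that $\Omega_0 \cap \mc{X}_K \neq \emptyset$ --- before your overlap-plus-union-bound step (which is then a sensible way to finish, and more careful than the paper on the measurability point) can close the argument.
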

\begin{proof}
		Given any $K \in \N$ and any associated $K$-multisample $\delta_K \in \Delta^K$, let $\bar{k} $ be an arbitrary index belonging to $\mc{K}$. The mapping $\Theta_{\bar{k}}(\delta^{(1)}, \ldots, \delta^{(\bar{k})})$ returns the set of equilibria $\Omega_{\bar{k}}$, while once drawn the ($\bar{k}+1$)-th sample, we have $\Omega_{\bar{k}+1} \coloneqq \Theta_{\bar{k}+1}(\delta^{(1)}, \ldots, \delta^{(\bar{k})}, \delta^{(\bar{k}+1)})$. Note that, in view of Assumption~\ref{ass:nonemptiness}, both sets are guaranteed to be nonempty and are of the form defined in \eqref{eq:generic_set_equilibria}, i.e., generated by the intersection between an affine and a bounded polyhedral set. We show now that $\Omega_{\bar{k}+1} \subseteq \Omega_{\bar{k}}$, and then the statement will follow by induction over $\bar{k} \in \mc{K}$ by noticing further that $\Theta_0 \eqqcolon \Omega_{0} \supseteq \Omega_{1} \supseteq \ldots \supseteq \Omega_{K} \eqqcolon \Theta_K(\delta_K)$.
		
		On one hand, any $\bs{x}^\ast$ that is a \gls{GNE} for $\Gamma$ on $\mc{X}_{\bar{k}}$ and such that $\bs{x}^\ast \in \mc{X}_{\delta^{(\bar{k} +1)}}$, also belongs to $\Omega_{\bar{k}+1}$. To see this, recall the definition of $\Omega_{\bar{k}+1}$ in \eqref{eq:generic_set_equilibria}: the inclusion is clearly true for the affine part, $(M + M^\top) \bs{x}^\ast = c$, while if $\bs{x}^\ast \in \mc{P}_{\bar{k}}$ and $\bs{x}^\ast \in \mc{X}_{\delta^{(\bar{k} +1)}}$, then $\bs{x}^\ast \in \mc{P}_{\bar{k} + 1}$ in view of the structure of $\omega(\cdot)$, along with the convexity and compactness of each set involved. Now, let $\mc{X}_{\bar{k} + 1} \coloneqq \mc{X}_{\bar{k}} \cap \mc{X}_{\delta^{(\bar{k}+1)}}$. In view of the properties of the normal cone, if there exists some \gls{GNE} $\bs{x}^\ast$ such that $\bs{x}^\ast \in \Omega_{\bar{k} +1}$, but $\bs{x}^\ast \notin \Omega_{\bar{k}}$, it must happen that $\bs{x}^\ast \in \textrm{bdry}(\Omega_{\bar{k} +1})$. In fact, $-F(\bs{x}^\ast) \in \mc{N}_{\mc{X}_{\bar{k} + 1}}$ and $-F(\bs{x}^\ast) \notin \mc{N}_{\mc{X}_{\bar{k}}}$ if and only if $F(\bs{x}^\ast) \neq \{\bs{0}\}$, and this is possible at the boundary of $\mc{X}_{\bar{k} + 1}$ only, which in view of the compactness and convexity of each set corresponds to the boundary of $\Omega_{\bar{k} +1}$ (see also Fig.~\ref{fig:deg_scenario_GNEP}). Thus, $\Omega_{\bar{k} +1}$ can be represented as the union of two sets. Specifically, the first set gathers all those points that were equilibria for the game with $\bar{k}$-samples and remain feasible for the constraint corresponding to $\bar{k}+1$, while the second one contains all those points that did not belong to $\Omega_{\bar{k}}$ and may lie on the boundary of $\Omega_{\bar{k} +1}$, i.e.,
		\begin{equation}\label{eq:sets}
			\begin{aligned}
			\Omega_{\bar{k} +1} &= \{\bs{x} \in \mc{X}_{\bar{k} + 1} \mid \bs{x} \in \Omega_{\bar{k}}\} \cup \left\{\bs{x} \in \textrm{bdry}(\mc{X}_{\bar{k} +1}) \mid \right. \\
			&\left.\bs{x} \notin \Omega_{\bar{k}}, (\bs{y} - \bs{x})^\top F(\bs{x}) \geq 0 \; \forall \bs{y} \in \mc{X}_{\bar{k} +1}\right\}.
			\end{aligned}
		\end{equation}
	
		In view of Assumption~\ref{ass:nonemptiness}, $\{\bs{x} \in \mc{X}_{\bar{k} + 1} \mid \bs{x} \in \Omega_{\bar{k}}\} \subseteq \Omega_{\bar{k}}$ is nonempty, for any $\mc{X}_{\delta^{(\bar{k}+1)}}$ and $\bar{k} \in \mc{K}$. Finally, it follows that the second set in \eqref{eq:sets} is empty \gls{a.s.}, since it contains all points that are on the boundary of $\mc{X}_{\bar{k}+1}$ and belong to the affine set $(M + M^\top) \bs{x} = c$ which is of measure zero due to Assumption~\ref{ass:nondeg}, thus concluding the proof.
\end{proof}
\smallskip

We finally remark again that Fig.~\ref{fig:deg_scenario_GNEP} shows an example where the second set in \eqref{eq:sets} would not be of measure zero.

\subsection{A-posteriori probabilistic feasibility guarantees for $\Omega_{K}$}
The following definition is at the core of scenario approach theory and crucial for our subsequent developments.

\smallskip
\begin{definition}\textup{\cite[Def.~2]{campi2018general}}\label{def:support_sub}
	Given a $K$-multisample $\delta_K \in \Delta^K$, a support subsample $S \subseteq \delta_K$ is a $p$-tuple of elements extracted from $\delta_K$, i.e., $S \coloneqq \{\delta^{(k_1)}, \ldots, \delta^{(k_p)}\}$, $k_1 < \ldots < k_p$, which gives the equilibria of the original sample, i.e.,
	$$
	\Theta_p(\delta^{(k_1)}, \ldots, \delta^{(k_p)}) = \Theta_K(\delta^{(1)}, \ldots, \delta^{(K)}).
	$$
	\hfill$\square$
\end{definition}
\smallskip

Moreover, a support subsample $S$ is said to be irreducible if no further elements can be removed from $S$ without leaving the solution unchanged. With a slight abuse of notation, in the remainder we will refer to the notion of support subsample for $\delta_K \in \Delta^K$ \gls{w.r.t.} either $\mc{X}_K$, or $\Omega_{K}$.

In general, an algorithm that determines a support subsample can be defined as $\Upsilon_K : \delta_K \to \{k_1, \ldots, k_p\}$, $k_1 < \ldots < k_p$, such that $\{\delta^{(k_1)}, \ldots, \delta^{(k_p)}\}$ is a support subsample for $\delta_K$. Let us denote with $s_K \coloneqq |\Upsilon_K(\delta_K)|$ its cardinality. Note that $s_k$ is a random variable itself as it depends on $\delta_K$.

Thus, given any $K$-multisample $\delta_K \in \Delta^K$, the following result provides an a posteriori bound of the violation probability in \eqref{eq:violation_set} for the entire set of equilibria, $\Omega_K$.

\smallskip
\begin{theorem}\label{th:VI}
	Let Assumption~\ref{ass:nonemptiness} and \ref{ass:nondeg} hold true, fix some $\beta \in (0,1)$. Let $\varepsilon : \mc{K} \cup \{0\} \to [0, 1]$ be a function such that
	$$
	\left\{\begin{aligned}
	& \varepsilon(K) = 1,\\
	& \sum_{h = 0}^{K - 1} \left( \begin{array}{c}
	K\\
	h
	\end{array} \right) (1 - \varepsilon(h))^{K - h} = \beta.
	\end{aligned}
	\right.
	$$
	Then, for any $\Theta_K$, $\Upsilon_K$ and probability $\prob$, it holds that 
	\begin{equation}\label{eq:prob_feas_boud}
		\prob^K \{\delta_K \in \Delta^K \mid V(\Omega_{K}) > \varepsilon(s_K) \} \leq \beta.
	\end{equation}
	\hfill$\square$
\end{theorem}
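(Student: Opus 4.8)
The plan is to obtain \eqref{eq:prob_feas_boud} as a direct instantiation of the abstract a-posteriori scenario bound \cite[Th.~1]{campi2018general}, by verifying that the set-valued decision map $\Theta_K$ meets the two structural hypotheses on which that theorem rests. The first hypothesis is uniqueness of the decision: for every $K$ and every $\delta_K \in \Delta^K$ there is a single set of \gls{v-GNE}, namely $\Omega_K = \Theta_K(\delta_K)$, which is a nonempty, compact and convex subset of $\mc{X}$. As already observed above this follows from \cite[Th.~2.3.5]{facchinei2007finite}, since $\mc{X}_K$ is a bounded polyhedron and $F$ is continuous and monotone; hence $\Theta_K : \Delta^K \to 2^{\mc{X}}$ is genuinely single-valued and condition (i) is met.

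The second hypothesis is the consistency requirement \cite[Ass.~1]{campi2018general}, whose set-oriented counterpart reads $\Theta_K(\delta_K) \subseteq \Omega_{\delta^{(k)}}$ for every $k \in \mc{K} \cup \{0\}$. This is precisely the content of Lemma~\ref{lemma:ifonlyif}, which holds almost surely under Assumptions~\ref{ass:nonemptiness} and \ref{ass:nondeg}. Its proof also establishes the monotone nesting $\Omega_0 \supseteq \Omega_1 \supseteq \cdots \supseteq \Omega_K$, so that removing from $\delta_K$ any sample whose constraint does not actively shape $\Omega_K$ leaves the equilibrium set unchanged; this makes the notion of support subsample in Definition~\ref{def:support_sub} well posed and in exact correspondence with the abstract one, with cardinality $s_K = |\Upsilon_K(\delta_K)|$ a random variable.

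With both hypotheses in place, I would invoke \cite[Th.~1]{campi2018general} verbatim, identifying its violation functional with $V(\Omega_K)$ from Definition~\ref{def:violation_set} — note that the event $\Omega_K \not\subseteq \Omega_\delta$ is exactly the event that a freshly drawn $\delta$ renders some element of $\Omega_K$ a non-equilibrium — and its support-subsample cardinality with $s_K$. The function $\varepsilon(\cdot)$ defined implicitly by the displayed equation, together with the normalization $\varepsilon(K)=1$, is the same object appearing in that theorem, so the bound $\prob^K\{\delta_K \in \Delta^K \mid V(\Omega_K) > \varepsilon(s_K)\} \le \beta$ follows for every choice of $\Theta_K$, $\Upsilon_K$ and probability $\prob$.

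The main obstacle is reconciling the almost-sure qualifier in Lemma~\ref{lemma:ifonlyif} with the deterministic formulation of the consistency assumption required by \cite[Th.~1]{campi2018general}. I would handle this by letting $\mc{E} \subseteq \Delta^K$ denote the exceptional set of multisamples on which consistency fails, i.e.\ on which the degenerate overlap of a coupling constraint with the affine subspace $(M+M^\top)\bs{x} = c$ in \eqref{eq:generic_set_equilibria} occurs; by Assumption~\ref{ass:nondeg} this set has $\prob^K(\mc{E}) = 0$. Restricting to $\Delta^K \setminus \mc{E}$, all hypotheses of the abstract theorem hold deterministically and the bound applies there, while the contribution of $\mc{E}$ to the probability on the left-hand side of \eqref{eq:prob_feas_boud} is null; hence $\beta$ is preserved and no additional conservatism is introduced.
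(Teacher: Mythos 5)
Your proposal is correct and follows essentially the same route as the paper, whose proof consists precisely of verifying the two hypotheses of \cite[Th.~1]{campi2018general} via the single-valuedness of $\Theta_K$ and the consistency established in Lemma~\ref{lemma:ifonlyif}, and then invoking that theorem as a corollary. Your explicit treatment of the almost-sure qualifier through the null exceptional set $\mc{E}$ is a detail the paper leaves implicit, but it fills in rather than alters the argument.
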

\begin{proof}
	By leveraging on Lemma~\ref{lemma:ifonlyif}, the proof follows as a corollary of \cite[Th.~1]{campi2018general}.
\end{proof}

\smallskip
\begin{remark}
	As evident from \eqref{eq:prob_feas_boud}, to asses the robustness of the set of equilibria $\Omega_{K}$, one does not need to dispose of a full characterization of $\Omega_{K}$, namely an algorithm $\Theta_K(\cdot)$, but rather the number of support subsamples $s_K$, computed by means of $\Upsilon(\cdot)$. In the next section, we provide a possible algorithm $\Upsilon(\cdot)$ for the scenario-based \gls{GNEP} in \eqref{eq:single_prob_aggregative}.
	\hfill$\square$
\end{remark}
\smallskip

The following result provides an upper bound for $V(\Omega_{K})$.

\smallskip
\begin{proposition}\label{prop:better_performance}
	Given any $K \in \N \cup \{0\}$ and $\delta_K \in \Delta^K$, let $s_K$ and $v_K$ be the number of (possibly irreducible) support subsample for $\delta_K$, evaluated \gls{w.r.t.} $\Omega_{K}$ and $\mc{X}_K$, respectively. Then, $s_K \leq v_K$, and therefore $V(\Omega_{K}) \leq V(\mc{X}_{K})$.
	\hfill$\square$
\end{proposition}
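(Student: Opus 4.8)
The plan is to exploit the fact that, for fixed problem data, the set of \gls{v-GNE} $\Omega_{K}$ is a \emph{deterministic function} of the feasible set $\mc{X}_K$. Indeed, by the characterization \eqref{eq:generic_set_equilibria}, $\Omega_{K} = \{\bs{x} \mid (M+M^\top)\bs{x} = c\} \cap \mc{P}_K$, where the invariants $c,d$ and the map $\omega(\cdot)$ depend only on $\mc{X}_K$ and on the fixed matrices $M,q$. Consequently, any subsample of $\delta_K$ that reproduces the feasible set also reproduces the equilibrium set: if $S \subseteq \delta_K$ satisfies $\mc{X}(S) = \mc{X}_K$, i.e.\ $S$ is a support subsample w.r.t.\ $\mc{X}_K$, then $\Omega(S) = \Omega_{K}$, so $S$ is a support subsample w.r.t.\ $\Omega_{K}$ as well. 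This shows that the family of support subsamples for $\mc{X}_K$ is contained in the family of support subsamples for $\Omega_{K}$.

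First I would use this inclusion to bound the cardinalities. Let $S_{\mc{X}}$ be the irreducible support subsample w.r.t.\ $\mc{X}_K$ returned by the feasibility algorithm, so that $|S_{\mc{X}}| = v_K$. By the observation above, $S_{\mc{X}}$ is a support subsample w.r.t.\ $\Omega_{K}$; discarding any element whose removal does not alter $\Omega_{K}$ yields an irreducible support subsample $S_\Omega \subseteq S_{\mc{X}}$ for $\Omega_{K}$, with $|S_\Omega| \le |S_{\mc{X}}| = v_K$. Taking $\Upsilon_K$ to return such an $S_\Omega$ gives $s_K = |S_\Omega| \le v_K$. Intuitively, every coupling constraint that actively shapes $\Omega_{K}$ must already shape $\mc{X}_K$, since $\Omega_{K} \subseteq \mc{X}_K$ and $\Omega_{K}$ is the slice of (the relevant portion of) $\mc{X}_K$ by the fixed affine set; the converse may fail, which is precisely why the inequality can be strict.

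Then I would derive the ordering of the violation probabilities. The quickest route matching the word ``therefore'' is the certified-bound argument: since the function $\varepsilon(\cdot)$ defined in Theorem~\ref{th:VI} is nondecreasing in its argument, $s_K \le v_K$ yields $\varepsilon(s_K) \le \varepsilon(v_K)$, so the a-posteriori guarantee on $V(\Omega_{K})$ is never looser than the corresponding feasibility guarantee obtained with $v_K$. A sharper, sample-wise statement follows directly from Lemma~\ref{lemma:ifonlyif}: if a fresh $\delta$ does not violate feasibility, i.e.\ $\mc{X}_K \subseteq \mc{X}_{\delta}$, then appending $\delta$ to $\delta_K$ leaves the feasible set, and hence the equilibrium set, unchanged; applying Lemma~\ref{lemma:ifonlyif} to the augmented $(K{+}1)$-multisample gives $\Omega_{K} \subseteq \Omega_{\delta}$ \gls{a.s.} Thus $\{\delta \mid \Omega_{K} \not\subseteq \Omega_{\delta}\} \subseteq \{\delta \mid \mc{X}_K \not\subseteq \mc{X}_{\delta}\}$ up to a $\prob$-null set, and taking probabilities yields $V(\Omega_{K}) \le V(\mc{X}_K)$.

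The main obstacle I anticipate is making rigorous the claim that $\Omega_{K}$ is genuinely a function of $\mc{X}_K$ alone: one must verify that $c$, $d$, $\omega(\cdot)$ and $\mc{P}_K$ in \eqref{eq:generic_set_equilibria} are invariant whenever the feasible set is, which holds here only because the cost data $(M,q)$ are fixed and $\omega$ is restricted to $\mc{X}_K$. A secondary subtlety is the cardinality comparison for \emph{irreducible} (rather than minimum-cardinality) support subsamples: irreducibility is inclusion-minimality, so $s_K \le v_K$ should be read for the subsample obtained by reducing $S_{\mc{X}}$, and one should confirm that $\Upsilon_K$ can be chosen compatibly. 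Finally, the almost-sure caveat in the inclusion argument hinges on Assumption~\ref{ass:nondeg}, exactly the non-degeneracy already invoked in Lemma~\ref{lemma:ifonlyif}.
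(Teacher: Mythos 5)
Your proof is correct, but it runs through different inclusions than the paper's own argument, so it is worth comparing the two. For $s_K \leq v_K$, the paper argues at the level of \emph{individual samples}, using the polyhedral geometry: a sample $\delta^{(k)}$ is of support w.r.t.\ $\mc{X}_K$ precisely when $\textrm{bdry}(\mc{X}_{\delta^{(k)}}) \cap \textrm{bdry}(\mc{X}_K) \neq \emptyset$, and of support w.r.t.\ $\Omega_{K}$ precisely when $\textrm{bdry}(\mc{X}_{\delta^{(k)}}) \cap \Omega_{K} \neq \emptyset$; since $\Omega_{K} \subseteq \mc{X}_K$, every sample of the second kind is also of the first kind, and the cardinality bound follows from this inclusion of active-sample sets. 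You instead argue at the level of \emph{families of subsamples}: any subsample reproducing $\mc{X}_K$ reproduces $\Omega_{K}$ (because $\Omega_{K}$ is a deterministic function of $\mc{X}_K$ for fixed $(M,q)$), so the support subsamples for $\mc{X}_K$ form a subfamily of those for $\Omega_{K}$, and reducing an irreducible one for $\mc{X}_K$ yields one for $\Omega_{K}$ of no larger cardinality. The two inclusions go in opposite directions (samples versus families), but both are valid; your version is more abstract and would apply verbatim to any solution map that is a function of the feasible set alone, while the paper's geometric characterization of which samples are of support is precisely what later drives Algorithm~\ref{alg:support_agg_game} and Proposition~\ref{prop:irreducible_set_agg_GNEP} (enumeration of active facets). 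For the second claim, the paper only invokes monotonicity of $\varepsilon(\cdot)$ from Theorem~\ref{th:VI}, which strictly speaking compares the a-posteriori certificates $\varepsilon(s_K) \leq \varepsilon(v_K)$ rather than the violation probabilities themselves; your additional sample-wise argument---if $\mc{X}_K \subseteq \mc{X}_\delta$ then appending $\delta$ leaves the feasible set, hence the equilibrium set, unchanged, so Lemma~\ref{lemma:ifonlyif} gives $\Omega_{K} \subseteq \Omega_\delta$ almost surely, whence $\{\delta \in \Delta \mid \Omega_{K} \not\subseteq \Omega_\delta\} \subseteq \{\delta \in \Delta \mid \mc{X}_K \not\subseteq \mc{X}_\delta\}$ up to a $\prob$-null set---is the cleaner justification of the literal inequality $V(\Omega_{K}) \leq V(\mc{X}_{K})$, and on this point your write-up is tighter than the paper's.
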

\begin{proof}
	Given the linearity of both local and coupling constraints defining the feasible set of the game $\Gamma$ in \eqref{eq:single_prob_aggregative}, it follows from Definition~\ref{def:support_sub} that some sample $\delta^{(k)}$ is of support for $\delta_K$ \gls{w.r.t.} $\mc{X}_K$ if $\mc{X}_{\delta^{(k)}}$ is active on $\textrm{bdry}(\mc{X}_K)$, i.e., $\textrm{bdry}(\mc{X}_{\delta^{(k)}}) \cap \textrm{bdry}(\mc{X}_K) \neq \emptyset$. On the other hand, $\delta^{(k)}$ is of support \gls{w.r.t.}  $\Omega_{K}$ if  $\textrm{bdry}(\mc{X}_{\delta^{(k)}}) \cap \Omega_{K} \neq \emptyset$ (see Fig.~\ref{fig:scenario_GNEP} for a graphic illustration). Since, in general, $\Omega_{K} \subseteq \mc{X}_K = \cap_{k \in \mc{K}} \mc{X}_{\delta^{(k)}} \cap \mc{X}$, those samples that are of support for $\delta_K$ \gls{w.r.t.} $\Omega_{K}$, are of support \gls{w.r.t.} $\mc{X}_K$, but not viceversa. Therefore, $s_K \leq v_K$. Finally, since $\varepsilon(\cdot)$ in Theorem~\ref{th:VI} is an increasing function, we have $V(\Omega_{K}) \leq V(\mc{X}_{K})$ as desired.
\end{proof}
\smallskip

\begin{remark}
	The result in Proposition~\ref{prop:better_performance} implies that, given the same $K$-multisample $\delta_K \in \Delta^K$, \eqref{eq:prob_feas_boud} provides tighter bounds compared to \cite[Cor.~7]{pantazis2020aposteriori}, since we focus on the set of equilibria rather than on the entire feasibility set.
	\hfill$\square$
\end{remark}

\section{Computational aspects}
Next, we propose a structure-preserving, semi-decentralized algorithm to compute the number of support subsample \gls{w.r.t.} $\Omega_{K}$. In view of Theorem~\ref{th:VI}, $s_K$ is a crucial quantity to assess the risk associated with the entire set $\Omega_{K}$.

\begin{algorithm}[!t]
	\caption{Computation of the number of support subsample for aggregative \glspl{GNEP}}\label{alg:support_agg_game}
	\DontPrintSemicolon
	\SetArgSty{}
	\SetKwFor{ForAll}{for all}{do}{end forall}
	\smallskip
	\textbf{Initialization:}
	\begin{itemize} \setlength{\itemindent}{0.8cm}
		\item[(\texttt{S0.1})] Set $s_K \coloneqq 0$, identify
		$$\mc{A}_K \coloneqq \{k \in \mc{K} \mid \textrm{bdry}(\mc{X}_{\delta^{(k)}}) \cap \textrm{bdry}(\mc{X}_K) \neq \emptyset \}$$
		\item[(\texttt{S0.2})] Run $\Phi(\delta_0)$ to compute $\bs{x}_0 \in \mc{X}$, set $d \coloneqq \bs{x}^\top_0 M \bs{x}_0$ and $c \coloneqq (M + M^\top) \bs{x}_0$
	\end{itemize}
	\smallskip
	\textbf{Iteration $(i \in \mc{A}_K)$:} \\
	\begin{itemize}\setlength{\itemindent}{.5cm}
		\item[(\texttt{S1})] Solve the feasibility problem:
		\begin{equation}\label{eq:feas_prob_GNEP}
		\left\{
		\begin{aligned}
		&\underset{(\lambda, \bs{x}) \in \R^{n+1}}{\textrm{min}} & & 0\\
		&\hspace{.45cm}\textrm{ s.t. } & & h^\top \lambda + q^\top \bs{x} + d \leq 0,\\
		&&& H^\top \lambda - M^\top \bs{x} + c + q = 0,\\
		&&& \lambda \geq 0, \bs{x} \in \mc{X}_{K} \cap \textrm{bdry}(\mc{X}_{\delta^{(i)}}).
		\end{aligned}	
		\right.
		\end{equation}
		
		\item[(\texttt{S2})] If $\exists (\lambda, \bs{x})$ that solves \eqref{eq:feas_prob_GNEP}, set $s_K \coloneqq s_K + 1$
	\end{itemize}
\end{algorithm}

Specifically, by leveraging on Lemma~\ref{lemma:agg_GNEP}, in the case of \gls{GNEP} in aggregative setting the computation of the (minimal) number of support subsample \gls{w.r.t.} $\Omega_{K}$ reduces to solving a feasibility problem on the augmented space $\R^{n + 1}$. 
An outline of a complete procedure can be found in Algorithm~\ref{alg:support_agg_game}, where, given any $K$-multisample $\delta_K \in \Delta^K$, $\Phi : \Delta^K \to 2^{\mc{X}_K}$ can be seen as any iterative algorithm available in the literature that allows to compute an equilibrium solution to the aggregative \gls{GNEP} in \eqref{eq:single_prob_aggregative}, e.g., \cite{salehisadaghiani2016distributed,belgioioso2017semi,liang2017distributed}. Specifically, while (\texttt{S0.1}) allows to identify the active facets of the convex polytope $\mc{X}_K$ \cite{ziegler2012lectures}, (\texttt{S0.2}) requires to solve the \gls{NEP} in \eqref{eq:NEP_single_prob_aggregative}, here identified by $\delta_0 = \emptyset$. In this way, computing an equilibrium of the \gls{NEP} allows us to define the quantities $d$ and $c$, which characterize every point in $\Omega_{0}$ (and therefore of $\Omega_{K}$), also shaping the feasibility set in \eqref{eq:feas_prob_GNEP}. Successively, (\texttt{S1}) requires to solve a feasibility problem on each active facet identified at (\texttt{S0.1}), where $\bs{x} \in \mc{X}_{K} \cap \textrm{bdry}(\mc{X}_{\delta^{(i)}})$ translates into an equality constraint in view of the affine constraints involved, while (\texttt{S2}) increments the counter $s_K$ in case the problem at (\texttt{S1}) is feasible.
We next state and prove the main result related with Algorithm~\ref{alg:support_agg_game}.

\smallskip
\begin{proposition}\label{prop:irreducible_set_agg_GNEP}
	Let Assumption~\ref{ass:nonemptiness} and \ref{ass:nondeg} hold true. For any $K \in \N$ and $\delta_K \in \Delta^K$, Algorithm~\ref{alg:support_agg_game} returns $s^\ast_K$, the cardinality of the minimal, irreducible support subsample $\delta_K$ \gls{w.r.t.} the entire set of equilibria, $\Omega_{K}$.
	\hfill$\square$
\end{proposition}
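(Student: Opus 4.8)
The plan is to prove that Algorithm~\ref{alg:support_agg_game} increments the counter $s_K$ exactly once for each sample $\delta^{(i)}$ whose facet meets the equilibrium set, and then to identify this family of samples with the minimal irreducible support subsample \gls{w.r.t.} $\Omega_{K}$. A preliminary step I would carry out is the almost-sure identity $\Omega_{K} = \Omega_{0} \cap \mc{X}_K$: the inclusion $\Omega_{K} \subseteq \Omega_{0}$ is Lemma~\ref{lemma:ifonlyif}, $\Omega_{K}\subseteq\mc{X}_K$ is immediate from Definition~\ref{def:GNE}, and the reverse inclusion follows from \eqref{eq:generic_set_equilibria} because restricting $\omega(\cdot)$ from $\mc{X}$ to the smaller set $\mc{X}_K$ only increases it, so every $\bs{x}\in\Omega_{0}\cap\mc{X}_K$ still satisfies the defining inequality of $\mc{P}_K$. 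This identity is what lets the algorithm certify $\bs{x}\in\Omega_{K}$ by pairing a dual certificate for membership in $\Omega_{0}$, written through the local-constraint data $H,h$, with the explicit requirement $\bs{x}\in\mc{X}_K$.

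The core step is to show that, for each $i \in \mc{A}_K$, the feasibility problem \eqref{eq:feas_prob_GNEP} is solvable if and only if $\textrm{bdry}(\mc{X}_{\delta^{(i)}}) \cap \Omega_{K} \neq \emptyset$, i.e. $\delta^{(i)}$ is of support \gls{w.r.t.} $\Omega_{K}$. For the forward direction I would left-multiply $H^\top\lambda - M^\top\bs{x} + c + q = 0$ by $\bs{x}^\top$ and combine it with $H\bs{x}\leq h$, $\lambda\geq 0$ and $h^\top\lambda + q^\top\bs{x} + d \leq 0$ to obtain
$$
\psi(\bs{x}) \coloneqq \bs{x}^\top M \bs{x} - c^\top \bs{x} + d \leq 0.
$$
Since $M + M^\top \succcurlyeq 0$, $\psi$ is convex, and a short computation using $c = (M + M^\top)\bs{x}_0$ and $d = \bs{x}_0^\top M \bs{x}_0$ from (\texttt{S0.2}) shows that $\psi$ attains its global minimum value $0$ exactly on the affine set $\{\bs{x}\mid(M + M^\top)\bs{x} = c\}$. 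Hence $\psi(\bs{x})\leq 0$ forces $\psi(\bs{x}) = 0$ and $(M + M^\top)\bs{x} = c$, which in turn makes every intermediate inequality tight, yielding complementary slackness $\lambda^\top(H\bs{x} - h) = 0$ together with $H^\top\lambda = -(M\bs{x} + q) = -F(\bs{x})$, i.e. $-F(\bs{x})\in\mc{N}_{\mc{X}}(\bs{x})$ and therefore $\bs{x}\in\Omega_{0}$. Combined with $\bs{x}\in\mc{X}_K\cap\textrm{bdry}(\mc{X}_{\delta^{(i)}})$ and the identity of the first paragraph, $\bs{x}\in\Omega_{K}$ lies on the $i$-th facet. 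The converse reverses these manipulations: given $\bs{x}\in\textrm{bdry}(\mc{X}_{\delta^{(i)}})\cap\Omega_{K}\subseteq\Omega_{0}$, the characterization of $\Omega_{0}$ as the solution set of $\mathrm{VI}(\mc{X},F)$ supplies $\lambda\geq 0$ with $H^\top\lambda = -F(\bs{x})$ and $\lambda^\top(H\bs{x}-h)=0$ which, using $\bs{x}^\top M\bs{x} = d$, is readily seen to satisfy \eqref{eq:feas_prob_GNEP}. Thus (\texttt{S1})--(\texttt{S2}) add one to $s_K$ precisely when $\delta^{(i)}$ is of support \gls{w.r.t.} $\Omega_{K}$.

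It then remains to argue that iterating over $\mc{A}_K$ captures all support samples and that their number equals $s^\ast_K$. Any support sample \gls{w.r.t.} $\Omega_{K}$ lies in $\mc{A}_K$, since a point of $\textrm{bdry}(\mc{X}_{\delta^{(i)}})\cap\Omega_{K}$ is feasible with the $i$-th constraint active and hence belongs to $\textrm{bdry}(\mc{X}_K)$. Writing $\mc{S}^\ast\coloneqq\{i\in\mc{K}\mid\textrm{bdry}(\mc{X}_{\delta^{(i)}})\cap\Omega_{K}\neq\emptyset\}$ for the set detected by the algorithm, I would first show $\mc{S}^\ast$ is a support subsample: if $j\notin\mc{S}^\ast$ then constraint $j$ is strictly slack on $\Omega_{K}$, and a convexity/continuity argument along a segment joining a point of $\Omega_{K}$ to any hypothetical new equilibrium created by dropping $j$ produces a point of $\textrm{bdry}(\mc{X}_{\delta^{(j)}})\cap\Omega_{K}$, a contradiction; hence deleting all indices outside $\mc{S}^\ast$ leaves $\Omega_{K}$ unchanged, so $\Theta_{|\mc{S}^\ast|}(\mc{S}^\ast)=\Theta_K(\delta_K)$. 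The main obstacle is the converse, namely irreducibility and minimality: I must exclude degenerate configurations in which a facet meets $\Omega_{K}$ yet is removable (tangential contact, or several facets through a common vertex of $\Omega_{K}$), so that $|\mc{S}^\ast|$ neither over- nor under-counts the essential constraints. This is exactly where Assumption~\ref{ass:nondeg} enters: with probability one no coupling facet aligns with the equilibrium subspace $\{(M + M^\top)\bs{x} = c\}$ (the situation of Fig.~\ref{fig:deg_scenario_GNEP}), so each $i\in\mc{S}^\ast$ cuts a proper face of $\Omega_{K}$ within that subspace and its removal strictly enlarges $\Omega_{0}\cap\mc{X}_K$. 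Consequently $\mc{S}^\ast$ is irreducible and, being the unique such set, minimal, whence $s_K = |\mc{S}^\ast| = s^\ast_K$.
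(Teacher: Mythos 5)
Your proposal is correct and follows the same overall architecture as the paper's proof: restrict attention to the active samples $\mc{A}_K$, characterize ``support \gls{w.r.t.} $\Omega_{K}$'' as $\textrm{bdry}(\mc{X}_{\delta^{(i)}})\cap\Omega_{K}\neq\emptyset$, and decide this condition through the dual-based feasibility problem \eqref{eq:feas_prob_GNEP}. Where you genuinely depart from the paper is in how the decisive equivalence is established, and your version is tighter. The paper argues essentially one direction, and loosely: it invokes weak duality of the LP defining $\omega(\cdot)$ to claim that a feasible pair $(\lambda,\bs{x})$ certifies $\bs{x}\in\mc{P}_K$, but the second constraint of \eqref{eq:feas_prob_GNEP} is the \emph{combination} of dual feasibility $H^\top\lambda+M\bs{x}+q=0$ with the invariance relation $(M+M^\top)\bs{x}=c$, and the paper never shows that a feasible pair must satisfy the two relations separately rather than only their sum. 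Your convexity argument closes exactly this gap: chaining the constraints gives $\psi(\bs{x})=\bs{x}^\top M\bs{x}-c^\top\bs{x}+d\leq 0$, while $M+M^\top\succcurlyeq 0$ together with $c=(M+M^\top)\bs{x}_0$ and $d=\bs{x}_0^\top M\bs{x}_0$ gives $\psi\geq 0$ globally with equality exactly on $\{(M+M^\top)\bs{x}=c\}$; tightness of the chain then yields complementary slackness and $-F(\bs{x})=H^\top\lambda\in\mc{N}_{\mc{X}}(\bs{x})$, i.e., no false positives. You also prove the converse explicitly (no false negatives) by exhibiting the multiplier attached to a boundary equilibrium, and your preliminary identity $\Omega_{K}=\Omega_{0}\cap\mc{X}_K$ (\gls{a.s.}) makes precise the paper's passing use of $\Omega_{K}\subseteq\Omega_{0}$. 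What the paper's shorter route buys is brevity; what yours buys is a genuine two-sided certificate that the counter $s_K$ increments exactly on the support samples.

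One caveat. Your final step --- each $i\in\mc{S}^\ast$ is essential because, by Assumption~\ref{ass:nondeg}, ``its removal strictly enlarges $\Omega_{0}\cap\mc{X}_K$'' --- overstates what that assumption delivers. Assumption~\ref{ass:nondeg} excludes a sampled facet lying inside the equilibrium subspace, but not, for instance, two sampled facets meeting $\Omega_{K}$ at one and the same point; in that configuration both samples are detected by the algorithm, yet neither is individually essential, so $\mc{S}^\ast$ fails irreducibility and over-counts relative to $s^\ast_K$. Ruling this out \gls{a.s.} requires an additional general-position hypothesis on $\prob$ beyond the stated assumptions. Note, however, that the paper's own proof is no more rigorous here --- it asserts minimality ``as a consequence of the fact that $\mc{A}_K$ is the minimal support subsample for the polytope $\mc{X}_K$,'' which is subject to the same degeneracy caveat (and does not by itself transfer minimality from $\mc{X}_K$ to $\Omega_{K}$) --- so this is a limitation you share with, rather than add to, the published argument.
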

\begin{proof}
	First note that, in the setting of the scenario-based \gls{GNEP} in \eqref{eq:single_prob_aggregative}, $\mc{A}_K$ denotes the minimal, irreducible support subsample for $\delta_K$ \gls{w.r.t.} the convex polytope $\mc{X}_K$.  Then, by following the consideration adopted within the proof of Proposition~\ref{prop:better_performance}, i.e., every $\delta^{(i)}$, $i \in \mc{A}_K$, is of support also  \gls{w.r.t.} $\Omega_{K}$ if and only if $\textrm{bdry}(\mc{X}_{\delta^{(i)}}) \cap \Omega_{K} \neq \emptyset$. To check this condition for each $\delta^{(i)}$ it is sufficient to compute a solution (if one exists) on the active region of $\mc{X}_K$ associated with $\mc{X}_{\delta^{(i)}}$. Since, in general, $\Omega_{K} \subseteq \Omega_{0}$ (both bounded polyhedral sets), in view of Lemma~\ref{lemma:agg_GNEP} every equilibrium solution in $\Omega_{K}$ is characterized by: i) the invariance property with parameter $c$, which is computed, together with $d$, for the \gls{NEP}, i.e., \eqref{eq:single_prob_aggregative} with no coupling constraints; ii) shall lie into $\mc{P}_K$, defined in \eqref{eq:generic_set_equilibria}. Let us consider now the Lagrange dual optimization problem associated with $\omega(\bs{x})$ given by
	\begin{equation}\label{eq:dual_agg_GNEP}
	\left\{
	\begin{aligned}
	&\underset{\lambda \geq 0}{\textrm{max}} & & - h^\top \lambda\\
	&\hspace{0cm}\textrm{ s.t. } & & H^\top \lambda + M \bs{x} + q = 0.\\
	\end{aligned}	
	\right.
	\end{equation}
	
	In view of weak duality \cite{boyd2004convex}, $\bs{x} \in \mc{P}_K$ (recall the definition of $\mc{P}_K$ below \eqref{eq:generic_set_equilibria}) if there exists some $\lambda \geq 0$ such that \eqref{eq:dual_agg_GNEP} is feasible and $- h^\top \lambda - (d + q^\top \bs{x}) \geq 0$ is satisfied as $\omega(\bs{x}) \geq -h^\top \lambda$ for any such $\lambda$.
	Thus, by combining the equality in \eqref{eq:dual_agg_GNEP} and $(M+M^\top) \bs{x} = c$ to obtain the second constraint in \eqref{eq:feas_prob_GNEP}, computing an equilibrium on the boundary of an active constraint $\mc{X}_K \cap \textrm{bdry}(\mc{X}_{\delta^{(i)}})$ reduces to finding a feasible pair $(\lambda, \bs{x})$ for the convex optimization problem in \eqref{eq:feas_prob_GNEP}. Finally, $s_K$ increases only if such a feasibility problem has a solution, excluding all those samples $\mc{X}_{\delta^{(i)}}$ that does not intersect $\Omega_{K}$. The minimality follows as a consequence of the fact that $\mc{A}_K$ is the minimal support subsample for the polytope $\mc{X}_K$.
\end{proof}
\smallskip

\begin{remark}
	As tailored for \gls{GNEP} in aggregative form, Algorithm~\ref{alg:support_agg_game} requires to run the adopted iterative procedure $\Phi(\delta_K)$ once, and to solve \eqref{eq:feas_prob_GNEP} by means of some distributed algorithm $|\mc{A}_K|$-times, with $|\mc{A}_K| \leq K$. This clearly improves \gls{w.r.t.} the greedy algorithms proposed in \cite[\S II]{campi2018general} and \cite[\S III]{paccagnan2019scenario}, which would require running $\Phi(\delta_K)$ $K$-times.
	\hfill$\square$
\end{remark}

\section{Illustrative example}
We choose an academic example to illustrate the introduced theoretical results. Specifically, we consider a two-player \gls{GNEP} in aggregative form with scalar decision variables and quadratic structure, i.e., we consider $N=2$ agents, with cost functions $J_1(x_1, x_2) \coloneqq \tfrac{1}{2} x^2_1 + (1 - x_2) x_1$, $J_2(x_1, x_2) \coloneqq \tfrac{1}{2} x^2_2 - (1 + x_1) x_2$, and $\mc{X}_{i} \coloneqq \{x_i \in \R \mid |x_i| \leq 2\}$, $i = 1, 2$. Here, $\bs{x} \coloneqq \col(x_1,x_2)$, and
$$
M  \coloneqq \left[\begin{array}{cc}
\phantom{-}1 & -1\\
-1 & \phantom{-}1
\end{array}\right], \, q  \coloneqq \left[\begin{array}{c}
\phantom{-}1\\
-1
\end{array}
\right],
$$
which guarantee the monotonicity of the game mapping $F$ as $M+M^\top \succcurlyeq 0$.
Thus, it turns out that $\Omega_{0} \coloneqq \{\bs{x} \in \mc{X} \mid x_2 - x_1 - 1 = 0\}$, $\mc{X} \coloneqq \mc{X}_1 \times \mc{X}_2$, and since $M \succcurlyeq 0$, every $\bs{x}^\ast \in \Omega_{0}$ is characterized by invariants $c \coloneqq \col(-2,2)$ and $d \coloneqq 1$ as in Lemma~\ref{lemma:agg_GNEP}. We assume each set $\mc{X}_\delta$ be defined by a random halfspace of the form $\delta_1 x_1 + \delta_2 x_2 \leq \delta_3$. Moreover, we assume that $\delta \coloneqq \col(\delta_1,\delta_2,\delta_3)$ follows a uniform distribution with support $\Delta \coloneqq [-4, 4] \times [-4, 4] \times [4, 10]  \subseteq \R^3$, shaping the feasible set $\mc{X}_\delta \cap \mc{X}$.

\begin{figure}
	\centering
	\includegraphics[width=\columnwidth]{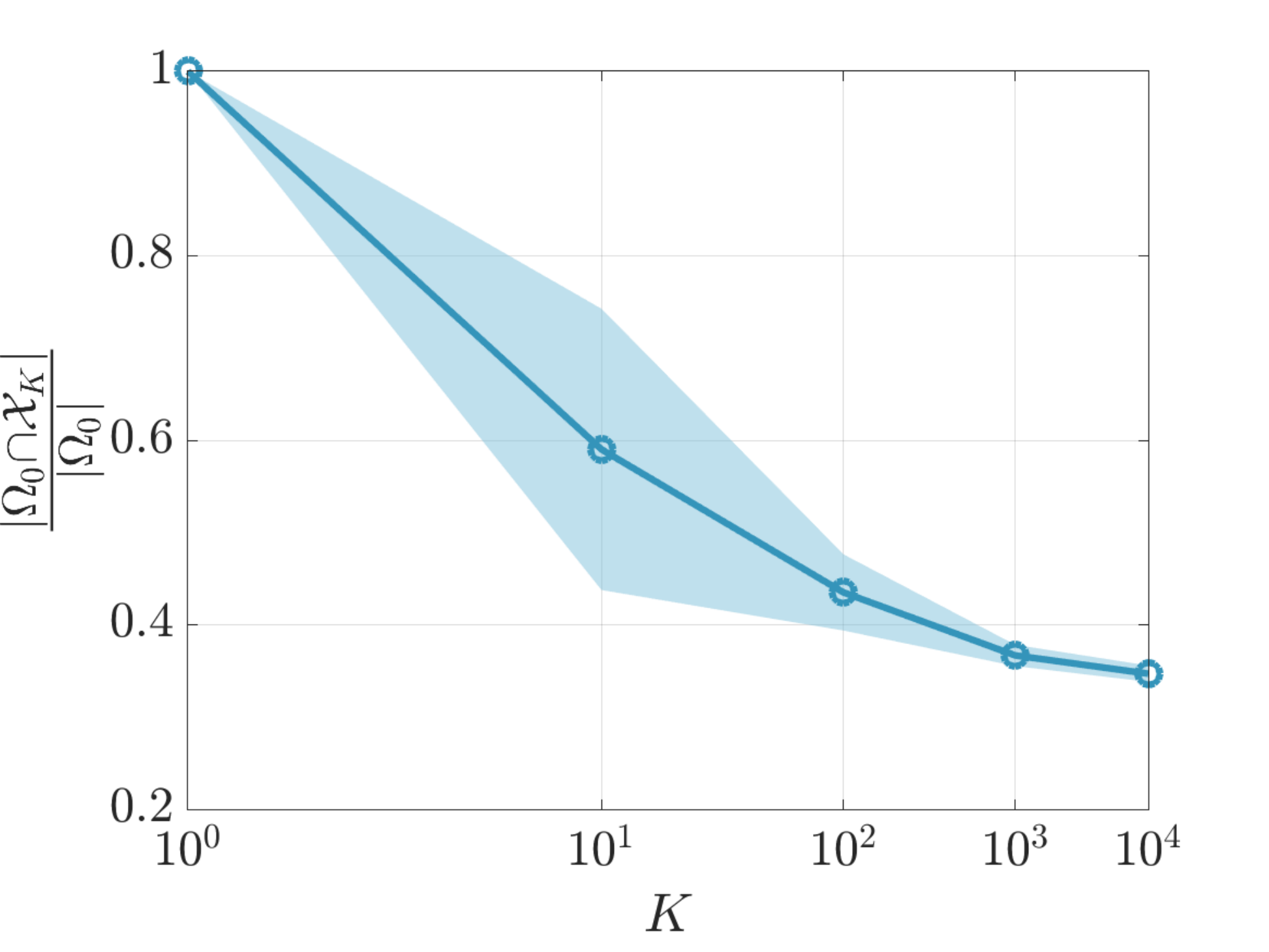}
	\caption{Size of $\Omega_K = \Omega_0 \cap \mathcal{X}_K$, normalized with the one of $\Omega_0$, as a function of the number of samples $K$. The solid line represents the average of $|\Omega_{0} \cap \mc{X}_K|/|\Omega_{0}|$ over $10$ numerical experiments, while the shaded area the standard deviation.}\label{fig:set_red}
\end{figure}

Then, given any $K$-multisample, the structure of $\Omega_{K}$ enable us to estimate $|\Omega_{K}|$ as the length of the interval $(M+M^\top) \bs{x} = c$ contained in $\mc{X}_K$, i.e., $|\Omega_k| = |\Omega_0 \cap \mathcal{X}_K|$. Thus, Fig.~\ref{fig:set_red} shows the average length of $\Omega_{K}$ over $10$ numerical experiments, normalized \gls{w.r.t.} the one of $\Omega_{0}$. Here, $\Omega_{K}$ shrinks as the number of samples grows, numerically supporting Lemma~\ref{lemma:ifonlyif}. Note that, in view of the structure of the support $\Delta$, as $K$ increases, the standard deviation of the uncertain parameter $\delta$ narrows around the average.

\begin{figure}
	\centering
	\includegraphics[width=\columnwidth]{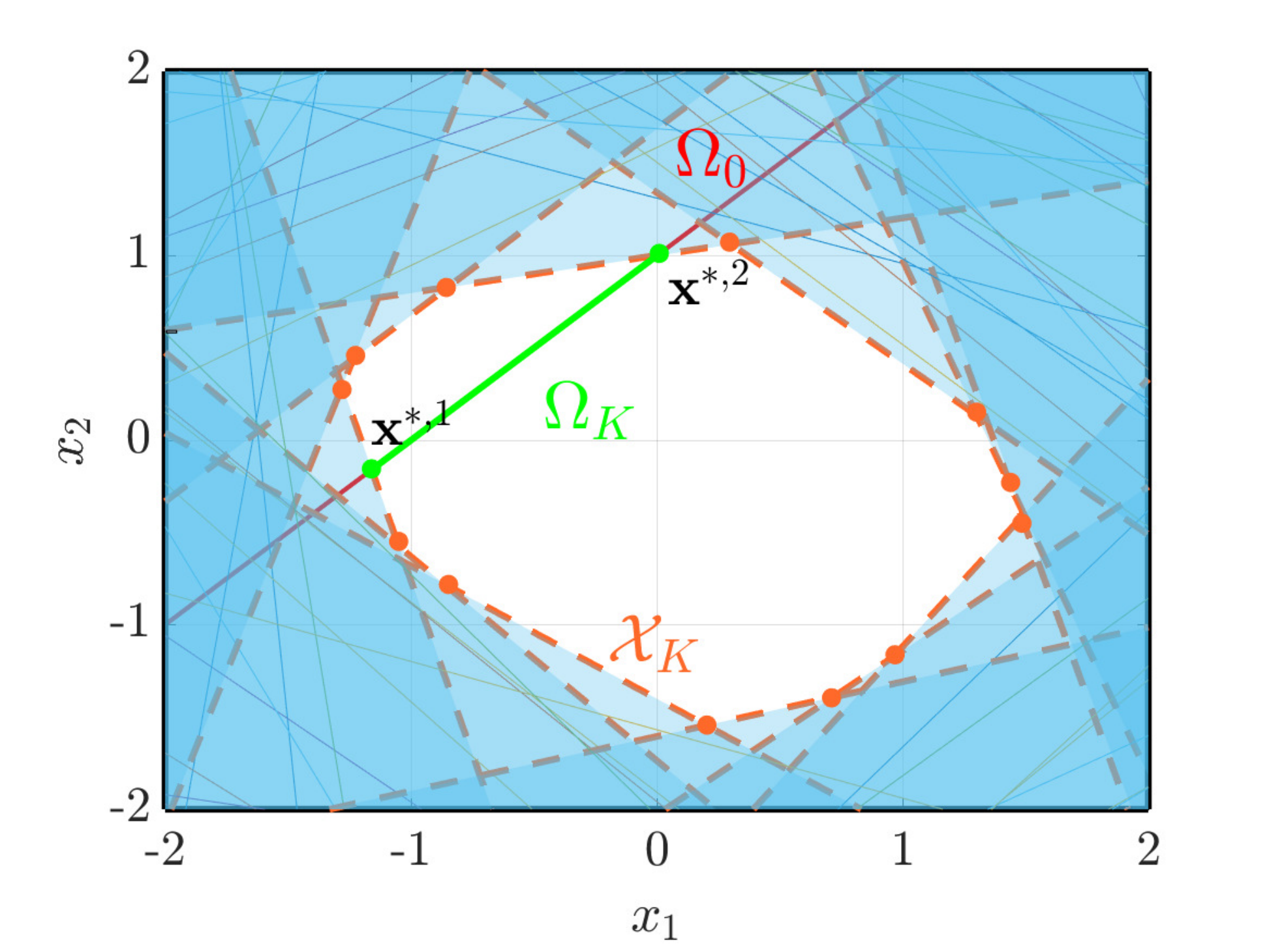}
	\caption{Sets obtained after drawing $100$ samples. The green dots $\bs{x}^{\ast,1}$ and $\bs{x}^{\ast,2}$ are the extrema of the set of \gls{GNE}, $\Omega_{100}$, while the orange dashed lines confine the feasible set, $\mathcal{X}_{100}$.}\label{fig:preliminary}
\end{figure}
\begin{figure}
	\centering
	\includegraphics[width=\columnwidth]{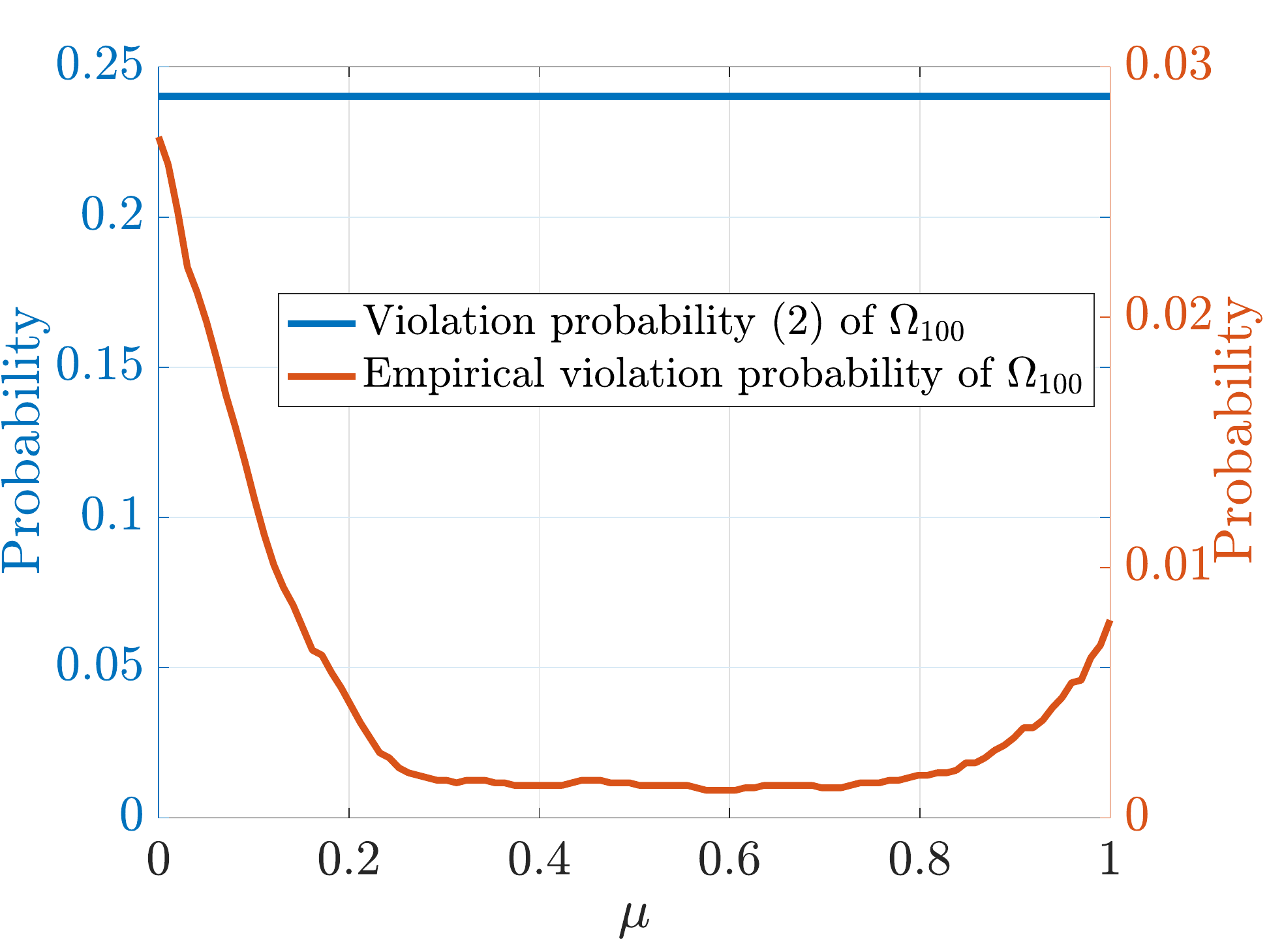}
	\caption{Comparison between theoretical and empirical violation probability for $\Omega_{100}$. After gridding $\Omega_{100}$ with granularity $0.01$, the empirical violation probability is evaluated for each grid-point against $10^4$ new samples.}\label{fig:theovsemp}
\end{figure}

We now compare the theoretical bounds provided in Theorem~\ref{th:VI}, by using $\beta = 10^{-6}$, with an empirical estimate of the violation probability in \eqref{eq:violation_set}. To this end, we generate $K = 100$ samples to obtain $\Omega_{100}$ in Fig.~\ref{fig:preliminary} and, after gridding the set of equilibria with granularity $0.01$, we compute the empirical violation of probability for each grid-point against $10^4$ new realizations of $\delta$. The theoretical violation level, encoded by the function $\varepsilon(\cdot)$ is analytically obtained by splitting $\beta$ evenly among the $100$ terms within the summation defined in Theorem~\ref{th:VI}. Given the structure of the problem, the family of equilibria in $\Omega_{100}$ corresponds to an interval, which can be parametrized by the points $(1 - \mu) \, \bs{x}^{\ast,1} + \mu \, \bs{x}^{\ast,2}$, for $\mu \in [0,1]$, where $\bs{x}^{\ast,1}$ and $\bs{x}^{\ast,2}$ are the extrema of $\Omega_{100}$ (see Fig.~\ref{fig:preliminary}).
As reported in Fig.~\ref{fig:theovsemp}, while the theoretical bound in \eqref{eq:prob_feas_boud}, determined by $s_{100} = 2$, provides an equivalent feasibility certificate for all the points in $\Omega_{100}$, the empirical violation probability is generally lower and attains the highest values close to $\bs{x}^{\ast,1}$  and $\bs{x}^{\ast,2}$. This is anticipated as closer to the boundary of the set higher probability of violation is expected.

\section{Conclusion and Outlook}
The scenario approach applied to robust game theory provides a numerically tractable framework to compute \gls{GNE} with quantifiable robustness properties in a distribution-free fashion. In the specific case of a \gls{GNEP} in aggregative form, we allow assessing the robustness properties of the entire set of generalized equilibria, thus relaxing the requirement for imposing a Nash equilibrium uniqueness assumption as typically performed in the literature. This merely requires to enumerate the active coupling constraints that intersect such set. Further extensions to other classes of \glspl{GNEP} and potential games, along with different algorithms to compute the number of support subsamples, a crucial quantity for the feasibility certificate, constitute topics of future work.

\balance
\bibliographystyle{IEEEtran}
\bibliography{20_CDC_scenario_GNEP}

\end{document}